\def \b  {\mathbf{b}}
\def \g  {\mathbf{g}}
\def \h  {\mathbf{h}}
\def \s  {\mathbf{s}}
\def \t  {\textbf{t}}
\def \v  {\mathbf{v}}
\def \x  {\textbf{x}}
\def \bx {\bar{\textbf{x}}}
\def \z  {\textbf{z}}
\def \bA {\mathbf{A}}
\def \bB {\mathbf{B}}
\def \bC {\mathbf{C}}
\def \bG {\mathbf{G}}
\def \bH {\mathbf{H}}
\def \bI {\mathbf{I}}
\def \bO {\mathbf{O}}
\def \bP {\mathbf{P}}
\def \bR {\mathbf{R}}
\def \bS {\mathbf{S}}
\def \bW {\mathbf{W}}
\def \bX {\mathbf{X}}
\def \bY {\mathbf{Y}}
\def \cB {\mathcal{B}}
\def \cL {\mathcal{L}}
\def \cS {\mathcal{S}}
\def \cT {\mathcal{T}}
\def \O  {\mathbb{O}}
\def \R  {\mathbb{R}}
\def \S  {\mathbb{S}}
\def \Del {\bm{\Delta}}
\def \Lam {\bm{\Lambda}}
\def \zer {\mathbf{0}}
\DeclarePairedDelimiter \norm {\lVert}{\rVert}
\DeclarePairedDelimiter \dotp {\big\langle}{\big\rangle}
\newcommand{\leqnomode}{\tagsleft@true\let\veqno\@@leqno}
\newcommand{\reqnomode}{\tagsleft@false\let\veqno\@@eqno}
\journalname{}
\begin{document}

\title{Convergence Analysis of Nonconvex ADMM for Rigid Registration 	\thanks{K.~N.~Chaudhury was supported by a MATRICS Grant from DST-SERB, Government of India.}
}
%\subtitle{Do you have a subtitle?\\ If so, write it here}

%\titlerunning{Short form of title}        % if too long for running head

\author{Aditya~V.~Singh         \and
        Kunal~N.~Chaudhury %etc.
}

%\authorrunning{Short form of author list} % if too long for running head

\institute{Aditya~V.~Singh \at
              Indian Institute of Science\\
              \email{adityavs@iisc.ac.in}           
           \and
           Kunal~N.~Chaudhury \at
              Indian Institute of Science \\
              \email{kunal@iisc.ac.in} 
}

\date{Received: date / Accepted: date}
% The correct dates will be entered by the editor

\maketitle

\begin{abstract}
We consider the problem of rigid registration, where we wish to jointly register multiple point sets via rigid transforms. This arises in applications such as sensor network localization, multiview registration, and protein structure determination. The least-squares estimator for this problem can be reduced to a rank-constrained semidefinite program (\ref{sdp}). It was recently shown that by formally applying the alternating direction method of multipliers (ADMM), we can derive an iterative solver (\ref{ncadmm}) for \ref{sdp}, wherein each subproblem admits a simple closed-form solution. The empirical success of \ref{ncadmm} has been demonstrated for multiview registration. However, its convergence does not follow from the existing literature on nonconvex ADMM. In this work, we study the  convergence of \ref{ncadmm} and our main findings are as follows. We prove that any fixed point of \ref{ncadmm} is a stationary (KKT) point of \ref{sdp}. Moreover, for clean measurements, we give an explicit formula for the ADMM parameter $\rho$, for which \ref{ncadmm} is guaranteed to converge to the global optimum (with arbitrary initialization). If the noise is low, we can still show that the iterates converge to the global optimum, provided they are initialized sufficiently close to the optimum. On the other hand, if the noise is high, we explain why \ref{ncadmm} becomes unstable if $\rho$ is less than some threshold, irrespective of the initialization. We present simulation results to support our theoretical predictions. The novelty of our analysis lies in the fact that we exploit the notion of tightness of convex relaxation to arrive at our convergence results.
\keywords{Semidefinite program \and Rank constraint \and Nonconvex \and ADMM \and Convergence}
% \PACS{PACS code1 \and PACS code2 \and more}
% \subclass{MSC code1 \and MSC code2 \and more}
\end{abstract}

\section{Introduction} \label{intro}
In rigid registration, we want to infer the global coordinates of a set of points, given the coordinates of overlapping subsets of these points in different local coordinate systems \cite{knc}. The local coordinate systems are related to each other by rigid transforms, which are otherwise unknown. This problem arises in applications such as sensor network localization, multiview registration, protein structure determination, and manifold learning \cite{sanyal,mihai12,arap10,krishnan,sharp2004,mihaimol12,fang2013,zhang2004}, where we wish to reconstruct an underlying global structure based on observations of multiple local sub-structures. 
For instance, consider an adhoc wireless network consisting of geographically distributed sensor nodes with limited radio range. To make sense of the data collected from the sensors, one usually requires the positions of the individual sensors. The positions can be found simply by attaching a GPS with each sensor, but this is often not feasible due to cost, power, and weight considerations. On the other hand, we can estimate (using time-of-arrival) the distance between sensors that are within the radio range of each other \cite{mao2007}. The problem of estimating sensor locations from the available inter-sensor distances is referred to as sensor network localization (SNL) \cite{mao2007,shang2004}. 
Efficient methods for accurately localizing small-to-moderate sized networks have been proposed over the years \cite{soares15,simonetto14,wang08,biswas06}. However, these methods typically cannot be used to localize large networks. To address this, scalable divide-and-conquer approaches for SNL have been proposed \cite{arap10,mihai12,knc2015,sanyal}, where the large network is first subdivided into smaller subnetworks which can be efficiently and accurately localized (pictured in Fig. \ref{fig:regscen}(a)). 
Each subnetwork (called patch) is localized independent of other subnetworks. Thus, the coordinates returned for a patch will in general be an arbitrarily rotated, flipped, and translated version of the ground-truth coordinates (Fig. \ref{fig:regscen}(b)). Consequently, the network is divided into multiple patches, where each patch can be regarded as constituting a local coordinate system which is related to the global coordinate system by an unknown rigid transform. We now want to assign coordinates to all the nodes in a global coordinate system based on these patch-specific local coordinates. 

Rigid registration also comes up in multiview registration, where the objective is to reconstruct a $3$D model of an object based on partial overlapping scans of the object. Here, the scans can be seen as patches, which are to be registered in a global reference frame via rotations and translations. Similar situation arises in protein conformation, where we are required to determine the $3$D structure of a protein (or other macromolecule) from overlapping fragments \cite{mihaimol12,fang2013}.

\begin{figure*}[h]
	\centering
	\includegraphics[width=1\linewidth]{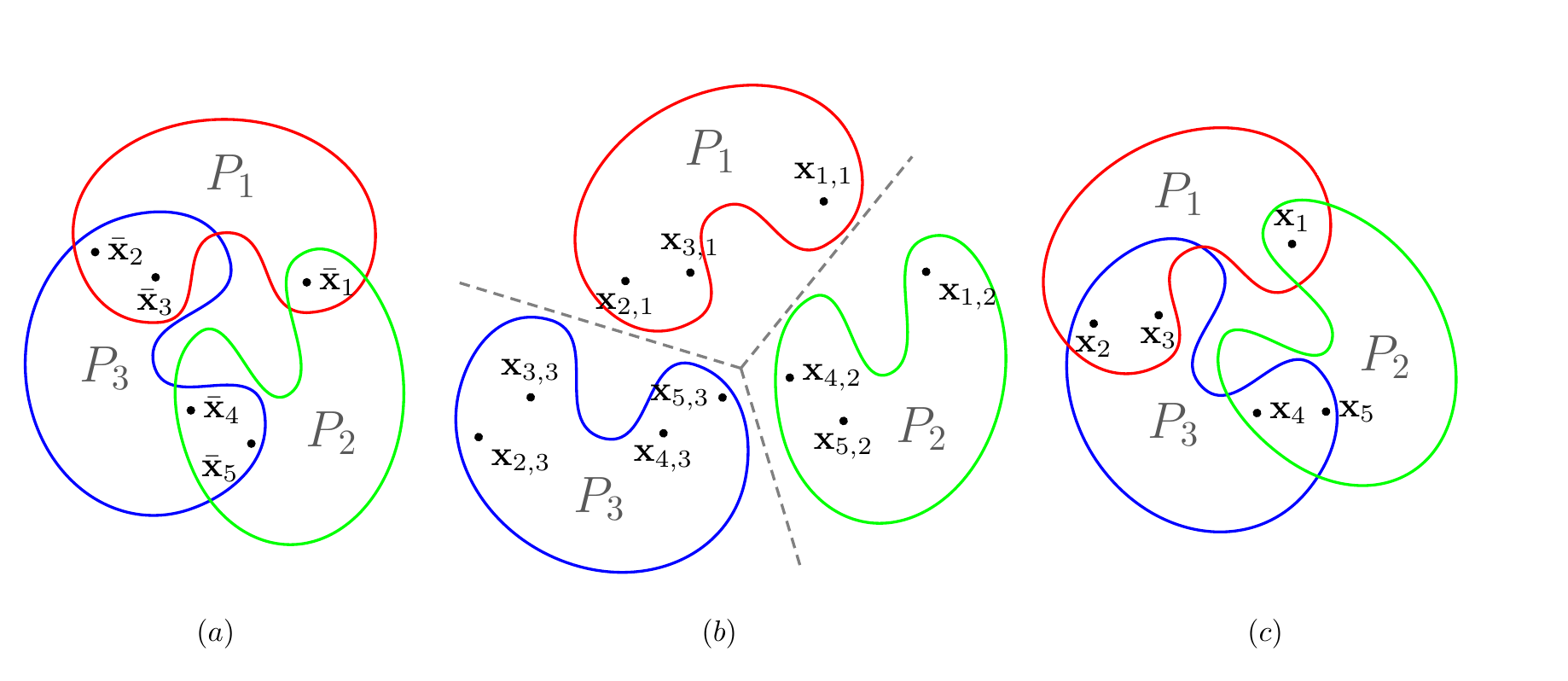}
	\caption
	{
		A simple registration scenario. $(a)$ Ground truth network, where $\bx_1,\ldots,\bx_5$ are the true global coordinates. $(b)$ Three local coordinate systems (patches), where $\x_{k,i}$ is the coordinate of the $k$-th node in the patch $P_i$. $(c)$ Registered network. Note that the registered and the ground-truth networks are related by a global rigid transform, which is the best we can do with the given data. 
	}
	\label{fig:regscen}
\end{figure*}

%--------------------------------

\subsection{Problem Statement} \label{subsec:probstat}
We now formally describe the rigid registration problem and set up the rank constrained semidefinite program (\ref{sdp}) that is the focus of our analysis. We first clarify our notation: $[m:n]$ denotes the set of integers $\{m,\ldots,n\}$; $\O(d)$ denotes the set of $d \times d$ orthogonal matrices; $\S^m$ denotes the set of symmetric matrices of size $m \times m$; $\S^m_+$ denotes the set of symmetric positive semidefinite matrices of size $m \times m$; $\bI_d$ denotes the $d \times d$ identity matrix. We will regard a matrix $\bA \in \R^{Md \times Md}$ as a block matrix composed of $M \times M$ blocks, where each block is of size $d \times d$; we will refer to the $(i,j)$-th block as $[A]_{ij}$, where $1 \leq i,j \leq M$.
We also recall that a rigid transform in $\R^d$ is a composition of an orthogonal transform (rotation, reflection) and a translation, and is denoted by $(\bO,\t)$, where $\bO \in \O(d)$ denotes the orthogonal transform, and $\t \in \R^d$ denotes the translation. Specifically, a rigid transform $(\bO,\t)$ maps a point $\x$ to the point $\bO \x + \t$.
 
Suppose a network consists of $N$ nodes in $\R^d$, which we label using $\cS = [1 : N]$. The network is sub-divided into $M$ patches (subsets of $\cS$), labeled $P_1,\cdots,P_M$. Let $\bar{\x}_1,\ldots,\bar{\x}_N \in  \R^d$ be the true coordinates of the $N$ nodes in some global coordinate system. We associate with each patch a local coordinate system: If node $k$ belongs to $P_i$, let $\x_{k,i} \in \R^d$ be the local coordinate of node $k$ in patch $P_i$. In other words, if $(\bar{\bO}_i,\bar{\t}_i)$ is the (unknown) rigid transform (defined with respect to the global coordinate system) associated with patch $P_i$, then we have
\begin{equation}
\label{exact}
\bar{\x}_k =  \bar{\bO}_i \x_{k,i} + \bar{\t}_i.
\end{equation}

In general, the local coordinate measurements are noisy and we cannot expect \eqref{exact} to hold exactly. Instead, we resort to least-squares fitting to solve the registration problem. More precisely, given the local coordinate measurements $(\x_{k,i})$, we consider the following least-squares minimization to estimate the rigid transforms and the global coordinates \cite{knc}:  
\leqnomode
\begin{equation} 
\label{reg} 
\tag{REG}
\underset{ (\bO_i), (\t_i), (\z_k) }{\min} \ \sum_{i=1}^{M} \sum_{k \in P_i} \norm{\z_k -  \left( \bO_i \x_{k,i} + \t_i \right) }^2.
\end{equation}
\reqnomode

The fundamental difficulty of \ref{reg} stems from the fact that variables $\bO_i$ are constrained to be in $\O(d)$, which is a nonconvex and disconnected set \cite{absil2009optimization}. For instance, $\O(1) = \{-1,1\}$, and $\O(2)$ is topologically equivalent to the union of two disjoint circles in $\R^2$. The disconnectedness makes it difficult to apply local optimization methods; in particular, if one initializes the method on the wrong component, one cannot hope to get close to the global optimum.

To isolate the difficulty in \ref{reg}, we will now reformulate it in a form that involves only the orthogonal transforms. To this end, note that \ref{reg} can be rewritten as
\begin{equation} \label{rewrite}
\underset{(\bO_i)}{\min} \ \left[ \underset{(\t_i), (\z_k)}{\min} \ \sum_{i=1}^{M} \sum_{k \in P_i} \norm{\z_k -  (\bO_i \x_{k,i} + \t_i) }^2 \right].
\end{equation}
It was observed in \cite{knc} that if we fix the orthogonal transforms $(\bO_i)$, the minimization problem inside the square brackets  in \eqref{rewrite} is a quadratic convex optimization in $(\t_i)$ and $(\z_k)$, with a closed-form optimum which is linear in $(\bO_i)$. In other words, if we knew the optimal orthogonal transforms for \ref{reg}, the optimal values of $(\t_i)$ and $(\z_k)$ could be computed using a simple linear transform. Based on this observation (we refer the reader to \cite{knc} for further details), the variables $(\t_i)$ and $(\z_k)$ can be eliminated, and \ref{reg} is reduced to the following optimization involving only the orthogonal transforms:
\leqnomode
\begin{equation} \label{orthreg} 
\tag{O-REG}
\underset{\bO_1,\ldots,\bO_M \in \O(d)}{\min} \ \sum_{i,j=1}^M \mathrm{Tr} \left(\bC_{ij} \bO_j^{\top} \bO_i \right).
\end{equation}
\reqnomode
Here, $\bC_{ij} \in \R^{d \times d}$, and $\bC_{ji} = \bC_{ij}^\top$. More specifically, \ref{orthreg} is equivalent to \ref{reg} in the following sense: If $(\bO_i^\ast)$, $(\t_i^\ast)$, $(\z_k^\ast)$ are globally optimal for \ref{reg}, then
\begin{equation} \label{equiv}
\begin{aligned}
\sum_{i=1}^{M} \sum_{k \in P_i} \norm{\z_k^\ast -  (\bO_i^\ast \x_{k,i} + \t_i^\ast) }^2
&= \underset{\bO_1,\ldots,\bO_M \in \O(d)}{\min} \ \sum_{i,j=1}^M \mathrm{Tr} \left(\bC_{ij} \bO_j^{\top} \bO_i \right) \\
&= \mathrm{Tr} \left(\bC_{ij} \bO_j^{\ast \top} \bO_i^\ast \right).
\end{aligned}
\end{equation}

Note that the objective in \ref{orthreg} can be regarded as a quadratic form in $(\bO_i)$. That is, defining $\bO = \left( \bO_1 \cdots \bO_M \right) \in \R^{d \times Md}$ and $\bC = \left( (\bC_{ij})_{1 \leq i,j \leq M} \right) \in \R^{Md \times Md}$, we have
\begin{equation} \label{quadform}
\sum_{i,j=1}^M \mathrm{Tr} \left(\bC_{ij} \bO_j^{\top} \bO_i \right) =  \mathrm{Tr} \left( \bC \bO^\top \bO \right).
\end{equation}
Here, matrix $\bC$ is symmetric, which follows from the fact that $\bC_{ji} = \bC_{ij}^\top$. Furthermore, $\bC$ is positive semidefinite \cite{knc}; this is because the objective in \ref{orthreg} is simply  a rewriting of the objective in \ref{reg}, which is always nonnegative, being the sum of norms. 

We will now formulate \ref{orthreg} as a rank-constrained semidefinite program. To this end, define Gram matrix $\bG = \bO^\top \bO \in \S^{Md}_+$. Note that the $(i,j)$-th block of $\bG$ is given by $[\bG]_{ij} = \bO_i^\top \bO_j$. In particular, the diagonal blocks of $\bG$ are the identity matrix, since $[\bG]_{ii} = \bO_i^\top \bO_i = \bI_d$. Following this, \ref{orthreg} can be shown \cite{knc} to be equivalent to
\leqnomode
\begin{equation} \label{sdp} \tag{REG-SDP}
\begin{aligned}
& \min_{\bG \in \S^{Md}_+} && \mathrm{Tr}\left( \bC \bG \right) \\
& \text{subject to}        && [\bG]_{ii} = \bI_d, \quad i \in  [1:M], \\     
&						   && \mathrm{rank}(\bG) = d.
\end{aligned}
\end{equation}
\reqnomode

\ref{sdp} is a standard semidefinite program \cite{aliza}, but with an additional rank constraint, which makes it nonconvex. An apparent drawback in reformulating \ref{orthreg} as \ref{sdp} is that the number of variables of optimization have increased from $d \times Md$ to $Md \times Md$. On the plus side, though, the domain of optimization in \ref{sdp} is connected, which means that we can expect local optimization methods to converge to global optimum of \ref{sdp}. Moreover, the nonconvexity of the problem is isolated in the rank constraint (i.e. dropping the rank constraint from \ref{sdp} makes the problem convex), a fact we will exploit in our analysis.

%-----------------------

\subsection{Algorithm} 
In this paper, we theoretically analyze the iterative algorithm proposed in \cite{miraj} for solving the nonconvex program \ref{sdp}. This algorithm is based on ADMM (alternating direction method of multipliers), a method traditionally employed for convex programs, where it has strong convergence guarantees \cite{boyd}. Recently, however, there have been a spate of works where applying ADMM to nonconvex programs have yielded impressive results \cite{boyd,chartrand2013,diamond,hong,miraj}. But the theory of convergence for nonconvex ADMM has not yet caught up with its convex counterpart. In particular, as we shall see when we discuss related works in Section \ref{subsec:relwork}, existing results on nonconvex ADMM do not apply to our algorithm. 

To see how we can apply ADMM to \ref{sdp}, we first reformulate \ref{sdp} as
\begin{equation} \label{eqn:ncadmm}
\begin{aligned}
& \underset{\bG, \bH \in \S^{Md}}{\min} && \mathrm{Tr}(\bC \bG) \\
& \text{subject to} && \bG \in \Omega, \: \bH \in \Theta, \\
&					&& \bG - \bH = \zer,
\end{aligned}
\end{equation}
where, $\Omega = \{\bX \in \S^{Md}_+ : \mathrm{rank}(\bX) \leq d \}$, and $\Theta = \{\bX \in \S^{Md} : [\bX]_{ii} = \bI_d \}$. The equivalence of \ref{eqn:ncadmm} to \ref{sdp} follows from the fact that the rank of any $\bH \in \Theta$ is at least $d$, since its diagonal blocks are $\mathbf{I}_d$. Moreover, if $\bH$ is feasible for \eqref{eqn:ncadmm}, the rank of $\bH$ cannot be more than $d$, since $\bH =\bG$ and  $\bG \in \Omega$. The splitting of the constraint into $\Omega$ and $\Theta$, with an additional linear (consensus) constraint, makes the problem algorithmically amenable to ADMM \cite{boyd}. For some fixed $\rho > 0$, the augmented Lagrangian for \eqref{eqn:ncadmm} is defined to be
\begin{equation} \label{eqn:auglang}
\cL_\rho (\bG,\bH,\Lam) = \mathrm{Tr}\left( \bC \bG \right) 
+ \mathrm{Tr}( \Lam ( \bG - \bH) )
+ \frac{\rho}{2} \norm{\bG-\bH}^2,
\end{equation}
where, $\Lam \in \S^{Md}$ is the dual variable for the constraint $\bG-\bH = \zer$ \cite{boyd}. The ADMM algorithm in \cite{miraj}, initialized with some $\bH^0$ and $\Lam^0$, involves the following updates for $k \geq 0$:
\leqnomode
\begin{equation}  \label{ncadmm} \tag{REG-ADMM}
\begin{aligned}
\bG^{k+1} &= \: \underset{\bG \in \Omega}{\text{argmin}} \:\: 
\cL_\rho \left( \bG, \bH^k, \Lam^k \right); \\
\bH^{k+1} &= \: \underset{\bH \in \Theta}{\text{argmin}} \:\: 
\cL_\rho \left( \bG^{k+1}, \bH, \Lam^k \right); \\
\Lam^{k+1} &= \: \Lam^k + \rho \left( \bG^{k+1} - \bH^{k+1} \right). 
\end{aligned}
\end{equation}
\reqnomode
As observed in \cite{miraj}, the first two sub-problems in \ref{ncadmm} ($\bG$-update and $\bH$-update steps) can be expressed as matrix projections onto $\Theta$ and $\Omega$ respectively. Namely,
\begin{equation} \label{ncadmmproj}
\begin{aligned}
\bG^{k+1} &= \Pi_\Omega \left( \bH^k - \rho^{-1} \left( \bC + \Lam^k \right) \right), \\
\bH^{k+1} &= \Pi_\Theta \left( \bG^{k+1} + \rho^{-1} \Lam^k \right),
\end{aligned}
\end{equation}
where, $\Pi_\Omega(\cdot)$ and $\Pi_\Theta(\cdot)$ denotes projection onto $\Omega$ and $\Theta$. Importantly, these projections are computationally efficient.
More specifically, projection on $\Theta$ can be computed simply by setting the diagonal blocks of the input matrix to $\bI_d$. On the other hand, projection on $\Omega$ can be computed efficiently by computing its top $d$ eigenvalues and retaining the ones that are positive \cite{miraj}. That is, if $\bA= \lambda_1 \boldsymbol{u}_1 \boldsymbol{u}_1 ^{\top}+\cdots+ \lambda_{Md} \boldsymbol{u}_{Md} \boldsymbol{u}_{Md} ^{\top}$, where $\lambda_1 \geq  \cdots \geq  \lambda_{Md}$ are the eigenvalues, and $\boldsymbol{u}_1,\ldots, \boldsymbol{u}_{Md}$ are the corresponding (orthonormal) eigenvectors, then
\begin{equation*}
\Pi_{\Omega}(\bA) = \sum_{i=1}^d \max(\lambda_i,0) \boldsymbol{u}_i \boldsymbol{u}_i ^{\top}.
\end{equation*}
Empirically, we find that \ref{ncadmm} has good convergence properties. In fact, it was shown in \cite{miraj} that its performance is comparable to state-of-the-art methods for the registration of three-dimensional multiview scans. Our aim in this paper is to study theoretical convergence of \ref{ncadmm}.

%--------------------------

\subsection{Convex Relaxation of \ref{sdp}} \label{subsec:cadmm}
In our analysis of \ref{ncadmm}, we will be leveraging convergence properties of a closely related ADMM algorithm that solves a \emph{relaxed} (convex) version of \ref{sdp}. Observe that the nonconvex nature of the rigid registration problem \ref{reg} is isolated in the rank constraint in \ref{sdp}. As a result, by simply dropping the rank constraint, we obtain the following convex program, which we call the \emph{convex relaxation} of \ref{sdp}:
\leqnomode
\begin{equation} \label{csdp} \tag{C-SDP}
\begin{aligned}
& \min_{\bG \in \S^{Md}_+} && \mathrm{Tr}(\bC \bG)\\
& \text{subject to}      && [\bG]_{ii} = \bI_d, \quad i \in  [1:M].
\end{aligned}
\end{equation}
\reqnomode

An ADMM algorithm to solve \ref{csdp} was proposed in \cite{kncspl}. Initialized with some $\bH^0$ and $\Lam^0$, this involves the following updates for $k \geq 0$:
\leqnomode
\begin{equation}  
\label{cadmm} \tag{C-ADMM}
\begin{aligned}
\bG^{k+1} &= \: \underset{\bG \in \S^{Md}_+}{\text{argmin}} \:\: 
\cL_\rho \left( \bG, \bH^k, \Lam^k \right); \\
\bH^{k+1} &= \: \underset{\bH \in \Theta}{\text{argmin}} \:\: 
\cL_\rho \left( \bG^{k+1}, \bH, \Lam^k \right); \\
\Lam^{k+1} &= \: \Lam^k + \rho \left( \bG^{k+1} - \bH^{k+1} \right). 
\end{aligned}
\end{equation}
\reqnomode
The expression for the augmented Lagrangian $\cL_\rho$ in \ref{cadmm} is identical to that in \eqref{eqn:auglang}. Note that \ref{cadmm} is similar to \ref{ncadmm}, with the only difference being the $\bG$-update step. Namely, the minimization  is over the nonconvex set $\Omega$ in \ref{sdp}, while it is over the closed convex set $\S^{Md}_+$ in \ref{csdp}. This difference turns out to be crucial for convergence: \ref{cadmm} converges to the global optimum of \ref{csdp} with arbitrary initializations \cite{kncspl}. 

While \ref{cadmm} enjoys strong convergence guarantees, it has the typical drawbacks of a convex relaxation. First, the rank of the global optimum of \ref{csdp} is not guaranteed to be $d$, i.e., it might not even be feasible for \ref{sdp}. If the rank is greater than $d$, we have to ``round'' the   solution of \ref{csdp}  to a rank-$d$ matrix \cite{knc}, which will generally be suboptimal for \ref{sdp}.
Second, because the $\bG$-update requires us to optimize over the entire PSD cone $\S^{Md}_+$, \ref{cadmm} requires the full eigendecomposition of an $Md \times Md$ matrix at every iteration \cite{kncspl}. \ref{ncadmm} overcomes these drawbacks in the following manner. First, \ref{ncadmm} works directly with the nonconvex problem \ref{sdp}, and thus obviates the need for any rounding. Second, \ref{ncadmm} requires only the top $d$ eigenvectors in the $\bG$-update step, resulting in appreciable computational savings. However, these benefits come at a cost---it is usually difficult to derive theoretical guarantees for nonconvex optimization. As far as we know, convergence guarantees for \ref{ncadmm} do not follow from existing results on nonconvex ADMM (cf. Section \ref{subsec:relwork} for further details).

\begin{figure}
	\centering
	\begin{subfigure}[b]{\linewidth}
		\centering
		\includegraphics[width=\linewidth]{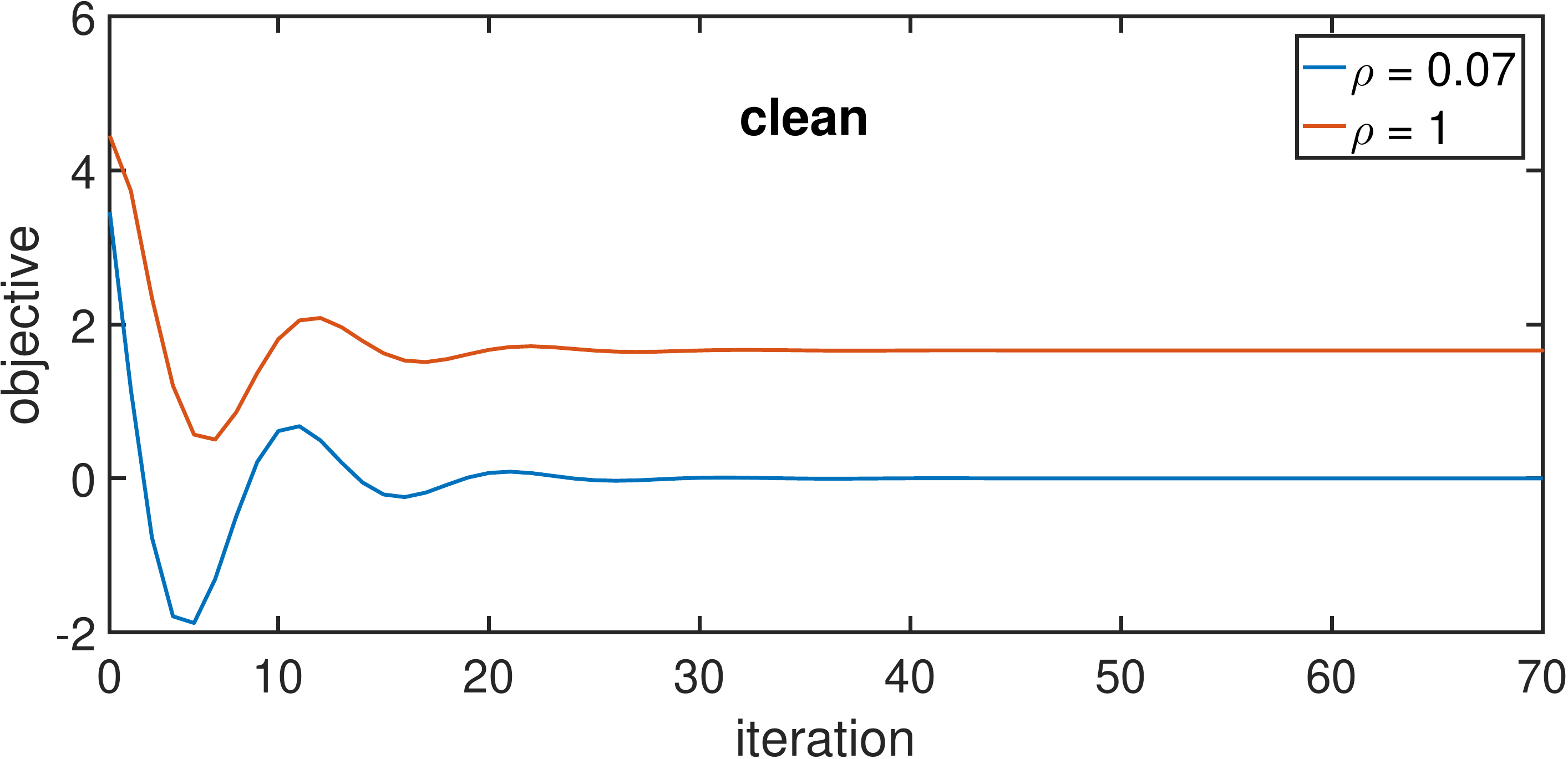}
		\caption
		{
			Even with clean measurements, the iterates get stuck in a local minimum when $\rho = 1$ (the optimum value is zero in this case).\\
		}
		\label{fig:cleanrho}
	\end{subfigure}
	\begin{subfigure}[b]{\linewidth}
		\centering
		\includegraphics[width=\linewidth]{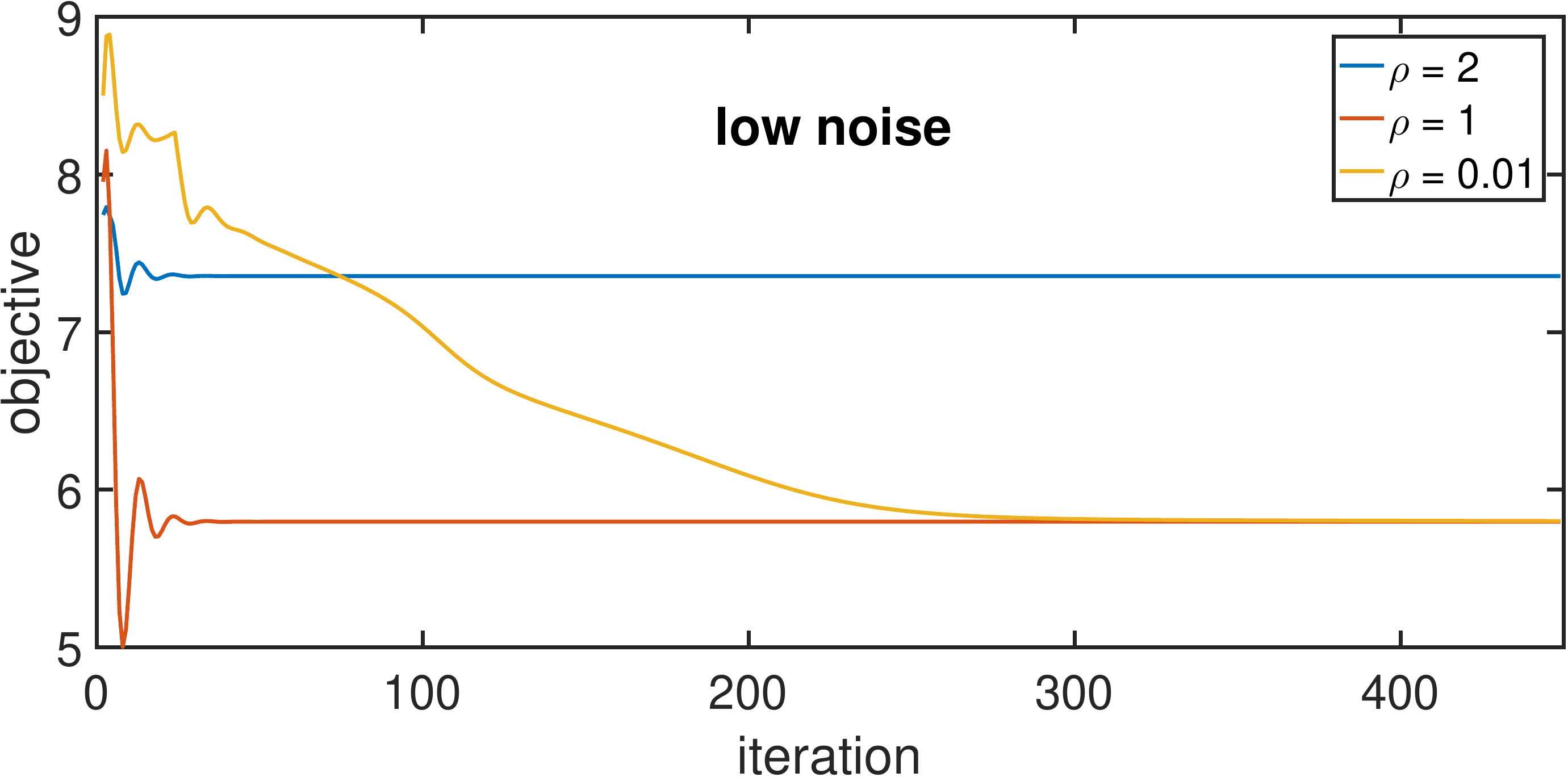}
		\caption
		{
			There are no oscillations for small $\rho$ when the noise is low.\\~\\
		}
		\label{fig:lonoiserho}
	\end{subfigure}	
	\begin{subfigure}[b]{\linewidth}
		\centering
		\includegraphics[width=\linewidth]{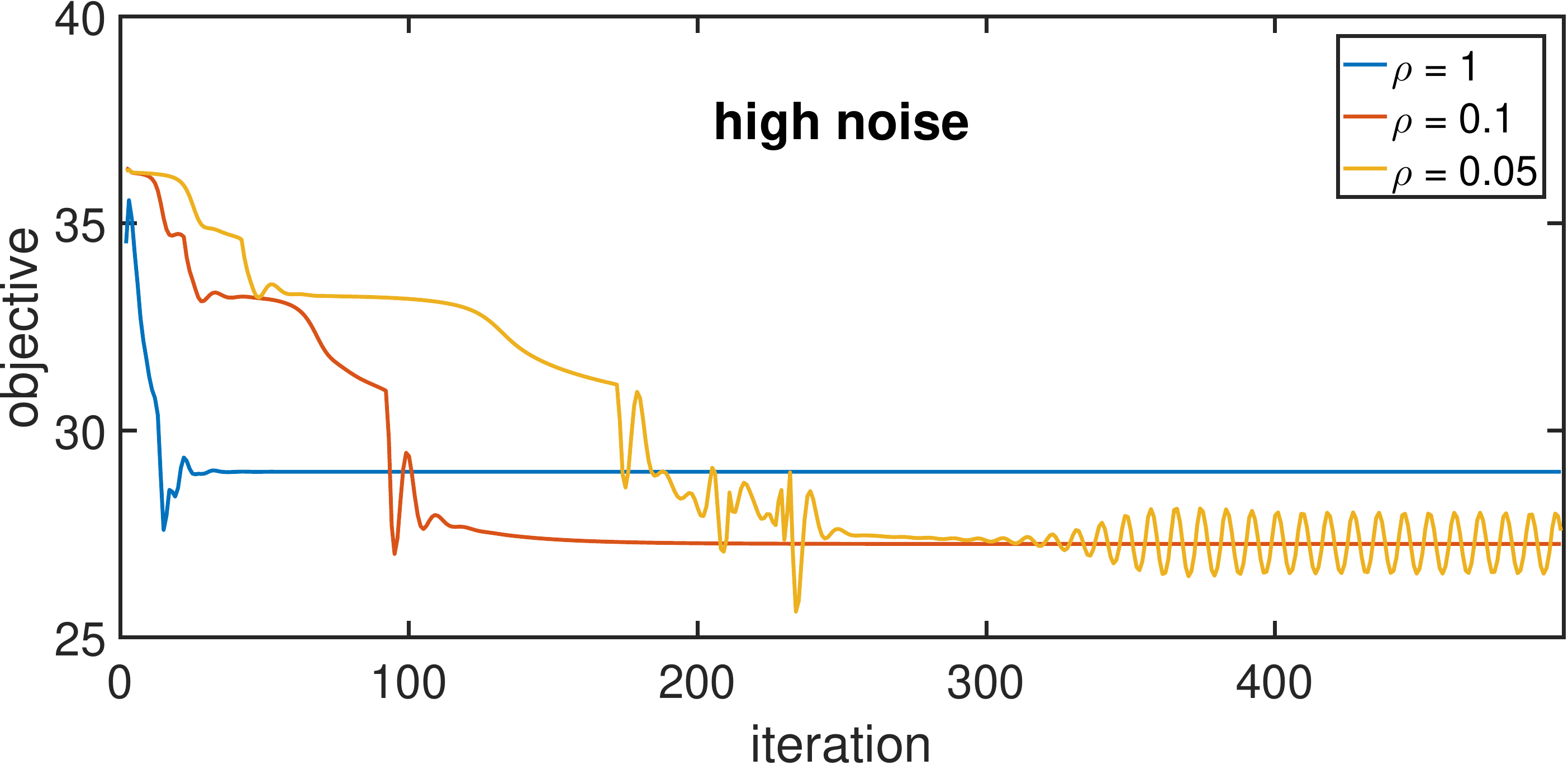}
		\caption
		{
			The iterates oscillate for small $\rho$ when the noise is high.
		}
		\label{fig:hinoiserho}
	\end{subfigure}
	\caption
	{
		Convergence behavior of \ref{ncadmm} at different noise levels in the data matrix $\bC$ (see the main text for description of the experiment).
	}
	\label{fig:noiserho}
\end{figure}

%---------------------

\subsection{Numerical Experiments} \label{subsec:numexp}
To understand the challenges involved in the convergence analysis of \ref{ncadmm}, we look at some simulation results for the sensor network localization problem. We consider a  two-dimensional network with ten nodes. There are three patches, where each patch contains all the ten nodes. The local coordinate system for each patch is obtained by arbitrarily rotating the global coordinate system. Moreover, we perturb the local coordinates using iid Gaussian noise. Our goal is to estimate the patch rotations up to a global transform. We set up  \ref{sdp} for this problem and solve it using \ref{ncadmm}. Fig. \ref{fig:noiserho} shows the dependence of \ref{ncadmm} on $\rho$ at different noise levels, where by ``noise level'' we mean the variance of the Gaussian noise. Notice that even when the local coordinates measurements are clean, \ref{ncadmm} may get stuck in a local minimum depending on $\rho$. This dependence of the limit point on $\rho$ is observed both at low and high noise levels. This is in contrast with \ref{cadmm}, where the iterates converge to a global minimum for any positive $\rho$ \cite{kncspl}. Also observe that when the noise is relatively large (Fig.  \ref{fig:hinoiserho}), the iterates of the algorithm may oscillate without converging if $\rho$ is small. Such non-attenuating oscillations are \emph{not} observed when the noise is low (see Fig.  \ref{fig:lonoiserho}).

%----------------------

\subsection{Contribution} \label{subsec:contri}
The foregoing simulation results provide the main motivation for this paper, namely, we wish to theoretically justify the observed behavior of \ref{ncadmm} in different noise regimes. We first clarify what we mean by ``noise''. As is clear from our discussion in Section \ref{subsec:probstat}, the data matrix $\mathbf{C}$ in \ref{sdp} ultimately depends on the local coordinate measurements $\x_{k,i}$, $i \in [1:M]$, $k \in P_i$. If these measurements are exact, we say that $\mathbf{C}$ is \emph{clean}; otherwise, we say that $\mathbf{C}$ is noisy. Furthermore, we make a distinction between \emph{low} and \emph{high} noise. In this context, we bring in the notion of \emph{tightness} of the convex relaxation \ref{csdp}. Recall that \ref{csdp} is derived by dropping the rank constraint. Let $\bG^\ast$ denote a global optimum of \ref{csdp}. If $\mathrm{rank}(\bG^\ast) = d$, then clearly $\bG^\ast$ is global optimum of \ref{sdp} as well. That is, global minimizer of the convex relaxation is a global minimizer for the original nonconvex program too. In this case, we say that the relaxation is \emph{tight} \cite{bandeira}. Empirically, we notice the well-known phenomena of phase transition for convex relaxations (e.g. see \cite{mixon2015phase,donoho2011}), where below a certain noise threshold, the relaxation remains tight (see Fig. \ref{fig:phasetran}). If the noise level is below (resp. above) this threshold, we say that the noise is low (resp. high).
An informal account of our main findings is as follows:
\begin{enumerate}
	\item We prove that any fixed point of \ref{ncadmm} is a stationary (KKT) point of \ref{orthreg} (Theorem \ref{thm:fixpt}).
	
	\item We rigorously establish the existence of low noise regime. That is, we show that there is a noise threshold below which the convex relaxation \ref{csdp} is tight (Theorem \ref{thm:tight}). 
	
	\item At low noise, we show that the \ref{ncadmm}  iterates converge to the global optimum, provided they are initialized sufficiently close to the optimum (Theorem \ref{thm:lownoise}).
	
	\item If the data matrix is clean, then, for arbitrary  initialization, we compute $\rho$ for which \ref{ncadmm} converges to the global optimum of \ref{sdp} (Theorem \ref{thm:cleancase}). 
	%Recall that even when the data matrix is clean, \ref{ncadmm} might get stuck in a local optimum (Fig. \ref{fig:cleanrho}). 
	
	\item At high noise, we give a duality-based explanation of why the iterates exhibit non-attenuating oscillations when $\rho$ is small, and why no such oscillations are observed at low noise.
	%This suggests that for highly noisy measurements, $\rho$ should not be set to a very small value.
\end{enumerate}

The novelty of our analysis lies in the fact that we exploit the phenomenon of tightness of convex relaxation to prove convergence of the nonconvex ADMM. We contrast this approach with existing works on nonconvex ADMM (which rely on assumptions that do not apply to \ref{ncadmm}) in the following subsection (Section \ref{subsec:relwork}).

\begin{figure}
	\centering
	\includegraphics[width=1\linewidth]{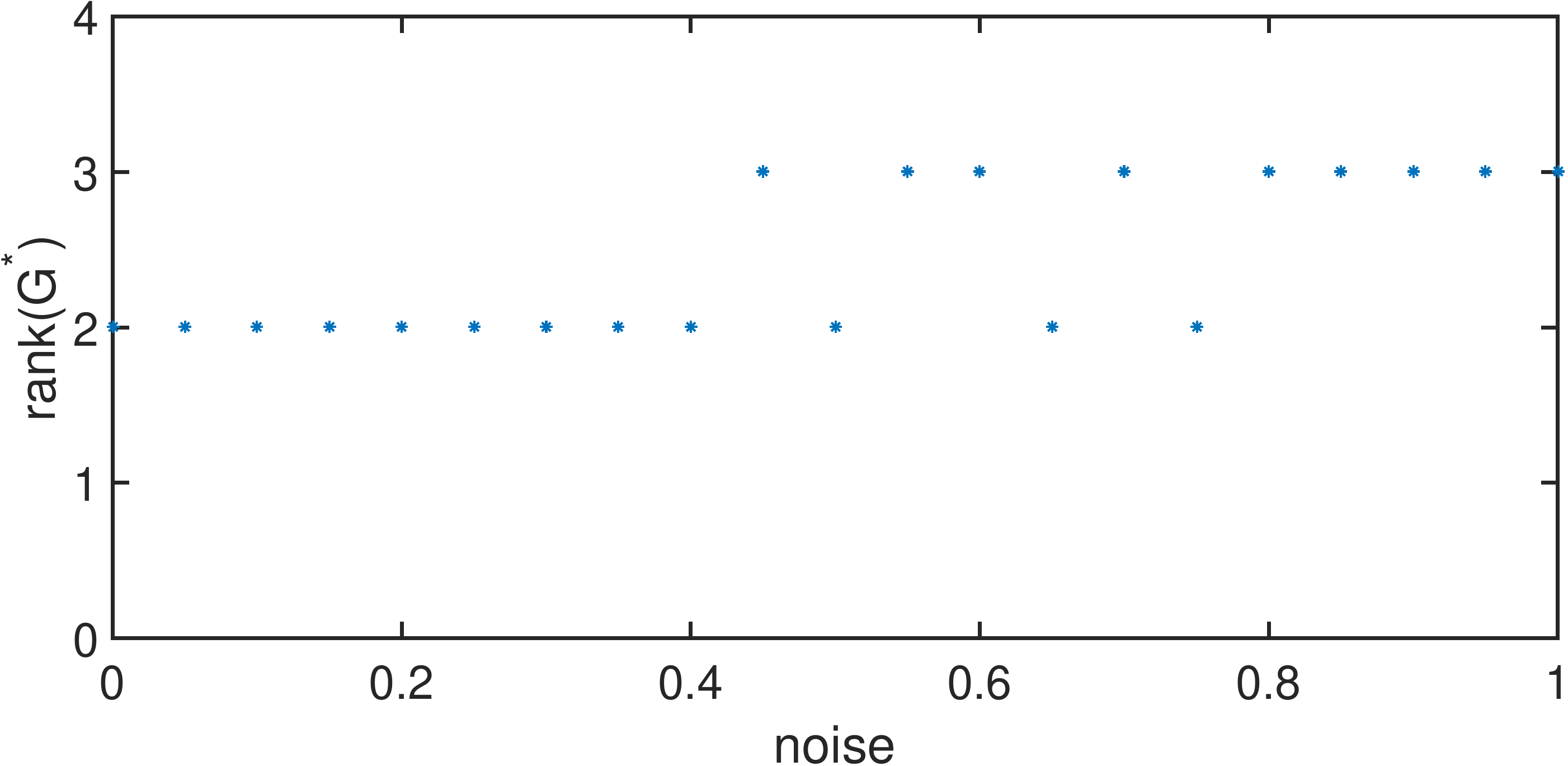}
	\caption
	{
		Phase transition for \ref{csdp}. The plot shows the rank of the global optimum $\bG^\ast$ as a function of the noise level. Below a certain noise threshold, the rank of $\bG^\ast$ is exactly $2$, i.e., the relaxation is tight. Above this  threshold, the rank of $\bG^\ast$ exceeds $2$, making it infeasible for \ref{sdp}. 
	}
	\label{fig:phasetran}
\end{figure}

%---------------------

\subsection{Related Work} \label{subsec:relwork}
The rank-restricted subset $\Omega$ of the PSD cone is nonconvex, which implies that standard convergence result for ADMM \cite{boyd} does not directly apply to \ref{ncadmm}. However, we do leverage the convergence of convex ADMM for analyzing the convergence of \ref{ncadmm} when the noise is low. A phase transition phenomena similar to the one cited above has been analyzed in \cite{bandeira}, albeit in the context of phase synchronization. Our proof of existence of the phase transition for  \ref{csdp}  is in the spirit of this analysis.

The theoretical convergence of ADMM for nonconvex problems has been studied in \cite{hong,chang,wotao}. However, a crucial working assumption common to these results does not hold in our case. More precisely, observe that we can rewrite \ref{sdp} as
\begin{equation} \label{resdp} 
\begin{aligned}
& \underset{\bG, \bH \in \S^{Md}}{\min} 
&& \mathrm{Tr}(\bC \bG) + \iota_{\Omega}(\bG) + \iota_{\Theta}(\bH) \\
&\text{subject to}               
&& \bG - \bH = \zer,
\end{aligned}
\end{equation}  
where $\iota_{\Gamma}$ is the indicator function associated with a feasible set $\Gamma$ \cite{boyd}, namely, $\iota_{\Gamma}(\bY) = 0$ if $\bY \in \Gamma$, and $\iota_{\Gamma}(\bY) = \infty$ otherwise. Notice that, because of the indicator functions, the objective function in \eqref{resdp} is non-differentiable in \emph{both} $\bG$ and $\bH$. This violates a regularity assumption common in existing analyses of nonconvex ADMM, namely, that the objective must be smooth in \emph{at least} one variable. In these works, convergence results are obtained by proving a monotonic decrease in the augmented Lagrangian. This requires: (i) bounding successive difference in dual variables by successive difference in primal variable, which is where the assumption of smoothness is used; (ii) requiring that the parameter $\rho$ is above a certain threshold. In particular, it is not clear whether this thresholding of the value of $\rho$ is fundamental to convergence, or just an artifact of the analysis. 

We do not make such smoothness assumptions in our analysis. We can afford to do this since we are analyzing a special class of problems, as opposed to the more general setups in \cite{hong,chang,wotao}. Instead of showing a monotonic decrease in the augmented Lagrangian, our analysis relies on the phenomenon of tightness of convex relaxation. This provides more insights into the convergence behavior of the algorithm. For instance, our explanation in Section \ref{subsec:osc} shows that the instability of the algorithm (in the high noise regime) for low values of $\rho$ is fundamental, while suggesting why this instability is not observed in the low noise regime.

%---------------------

\subsection{Organization}
The rest of the paper is organized as follows. The main results are discussed in Section \ref{sec:mainres}, which consists of six subsections. We conclude with a summary of our findings in Section \ref{sec:concl}. Appendix contains proofs of some subsidiary results that we use in our analysis.

%--------------------------------------------------------------------

\section{Convergence Analysis} \label{sec:mainres}
In this section we state and prove the main results of the paper. We set up notations and state some linear algebraic preliminaries in Section \ref{subsec:prelim}. We then prove some results on duality theory in Section \ref{subsec:dual} that we will use in subsequent analysis. In Section \ref{subsec:fixed}, we prove that any fixed point of \ref{ncadmm} is a KKT point of \ref{orthreg}. Sections \ref{subsec:tightness} and \ref{subsec:lonoise} deal with low noise regime: in Section \ref{subsec:tightness} we show the existence of, and explicitly compute, the noise threshold below which \ref{csdp} is tight, thus theoretically establishing the existence of the low noise regime; in Section \ref{subsec:lonoise} we analyze convergence of \ref{ncadmm} in this regime. Finally, in Section \ref{subsec:osc}, we give a duality-based explanation of the instability of \ref{ncadmm} in high noise regime for small values of $\rho$.

\subsection{Notations and Preliminaries} \label{subsec:prelim}
For a matrix $\bX$, $\norm{\cdot}$ and $\norm{\cdot}_2$ denote the Frobenius and spectral norms; the latter is simply the largest singular value $\sigma_{\mathrm{max}}(\bX)$. We note that $\norm{\bX}_2 \leq \norm{\bX}$. $\mathrm{Tr}(\bA)$ denotes the trace of $\bA$; and $\dotp{\bX,\bY} = \mathrm{Tr}(\bX \bY)$ is the inner product between two symmetric matrices $\bX$ and $\bY$. For a symmetric matrix $\bX$, $\lambda_i (\bX)$ denotes the $i$-th eigenvalue, where we assume the eigenvalues $(\lambda_i)$ to be arranged in a nondecreasing order.

Given a matrix $\bX$, we refer to the subspace spanned by its columns as the \emph{rangespace} of $\bX$; $\mathrm{rank}(\bX)$ is the dimension of the rangespace of $\bX$. The subspace spanned by the vectors $\v$ such that $\bX \v = \zer$ is referred to as the \emph{nullspace} of $\bX$; $\mathrm{nullity}(\bX)$ is the dimension of nullspace of $\bX$.

Following are some of the linear algebraic facts that we shall use in our analysis:

\begin{enumerate}
	\item Suppose $\bX, \bY \in \R^{n \times n}$, such that $\bX \bY = \zer$. If $\mathrm{rank}(\bY) = d$, then $\mathrm{nullity}(\bX) \geq d$.    Similarly, if $\mathrm{nullity}(\bX) = d$, then $\mathrm{rank}(\bY) \leq d$.
	%This follows because $\bX \bY = \zer$ implies that each column of $\bY$ is in nullspace of $\bX$, which, in turn, means that the rangespace of $\bY$ lies in the nullspace of $\bX$. 
	
	\item Suppose $\bX, \bY \in \S^n_+$. Then, $\mathrm{Tr}(\bX \bY) = \zer \iff \bX \bY = \zer$. 
	
	\item (Weyl's Theorem \cite{bhatia}) Suppose $\bX, \bY \in \S^n$. Then for $i \in [1:n]$,
	\[
	\lambda_i(\bX) - \norm{\bY}_2 \leq \lambda_i(\bX + \bY) \leq \lambda_i(\bX) + \norm{\bY}_2.
	\]
	
	\item Suppose $\bX, \bY \in \S^n$, such that $\bX \bY = \zer$. Then $\bX$ and $\bY$ are simultaneously diagonalizable.
\end{enumerate}

We now note in the following proposition some properties of positive semidefinite matrices with identity matrix on the diagonal blocks. The proof is deferred to the Appendix.

\begin{proposition} \label{prop:Gprop}
	Suppose $\bG \in \S^{Md}_+$ and $[\bG]_{ii} = \bI_d$. Then
	\begin{enumerate} [$(a)$]
		\item $\mathrm{rank}(\bG) \geq d$;
		
		\item $\sigma_{\mathrm{max}} \left( [\bG]_{ij} \right) \leq 1, \:\: i,j \in [1:M]$;
		
		\item If $\mathrm{rank}(\bG) = d$, then any nonzero eigenvalue of $\bG$ is $M$.
	\end{enumerate}
\end{proposition}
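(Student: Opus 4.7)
The plan is to reduce all three claims to a Gram factorization. Since $\bG \in \S^{Md}_+$, I can write $\bG = \bU^\top \bU$ with $\bU \in \R^{r \times Md}$ where $r = \mathrm{rank}(\bG)$. Partitioning the columns of $\bU$ conformably with the block structure of $\bG$ as $\bU = \left[ \bU_1 \; \bU_2 \; \cdots \; \bU_M \right]$ with $\bU_i \in \R^{r \times d}$, I get the block-level identity $[\bG]_{ij} = \bU_i^\top \bU_j$. The diagonal constraint $[\bG]_{ii} = \bI_d$ then reads $\bU_i^\top \bU_i = \bI_d$, i.e., each $\bU_i$ has orthonormal columns. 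All three parts fall out of this observation.

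Part $(a)$ is immediate: any matrix with $d$ orthonormal columns must have at least $d$ rows, so $r \geq d$. For part $(b)$, since each $\bU_i$ has orthonormal columns, its spectral norm is exactly $1$, and submultiplicativity of $\norm{\cdot}_2$ gives
\[
\sigma_{\max}\!\left([\bG]_{ij}\right) = \norm{\bU_i^\top \bU_j}_2 \leq \norm{\bU_i}_2 \norm{\bU_j}_2 = 1.
\]
(An alternative route is to note that the $2d \times 2d$ principal submatrix of $\bG$ formed by blocks $(i,i),(i,j),(j,i),(j,j)$ is PSD with identity diagonal blocks, and then invoke a Schur-complement argument; but the Gram route is cleaner since the factorization is already at hand.)

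For part $(c)$, when $r = d$ each $\bU_i$ is a $d \times d$ matrix with orthonormal columns, hence a genuine orthogonal matrix, so $\bU_i \bU_i^\top = \bI_d$ also. Consequently
\[
\bU \bU^\top = \sum_{i=1}^M \bU_i \bU_i^\top = M \bI_d.
\]
Since $\bU^\top \bU$ and $\bU \bU^\top$ share the same nonzero eigenvalues, every nonzero eigenvalue of $\bG$ equals $M$.

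There is no serious obstacle: once the Gram factorization is in place, the diagonal-identity constraint forces each $\bU_i$ to have orthonormal columns, and the three assertions are direct consequences. The only small subtlety is in $(b)$, where I must use the spectral norm (rather than the Frobenius norm) to assert $\norm{\bU_i}_2 = 1$; this is correct precisely because the columns of $\bU_i$ are orthonormal.
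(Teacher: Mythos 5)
Your proof is correct, but it takes a more unified route than the paper, which handles the three parts with three separate arguments: for $(a)$ it simply observes that the first $d$ columns of $\bG$ contain the block $[\bG]_{11}=\bI_d$ and are therefore linearly independent; for $(b)$ it uses a purely quadratic-form argument, testing positive semidefiniteness of $\bG$ against vectors supported on blocks $i$ and $j$ (with $j$-th block $\pm\x_j$) to get $|\x_i^\top[\bG]_{ij}\x_j|\leq 1$ for all unit vectors; and for $(c)$ it builds the factor $\bB$ with $\bG=\bB^\top\bB$ from the spectral decomposition and then argues via the row-orthonormality of each $d\times d$ block $\bB_j$ that every eigenvector component satisfies $\norm{\v_{1j}}^2 = 1/\alpha_1$, summing over $j$ to get $\alpha_1=M$. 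You instead put the Gram factorization $\bG=\bU^\top\bU$ up front and derive all three claims from $\bU_i^\top\bU_i=\bI_d$; your part $(c)$, which computes $\bU\bU^\top=\sum_i\bU_i\bU_i^\top=M\bI_d$ and invokes the coincidence of nonzero eigenvalues of $\bU^\top\bU$ and $\bU\bU^\top$, is a genuinely cleaner shortcut than the paper's row-by-row eigenvector computation. The trade-off is minor: the paper's argument for $(b)$ is slightly more elementary (no norm submultiplicativity needed), while yours is shorter and makes the common structure of the three parts explicit. Your one flagged subtlety -- that $\norm{\bU_i}_2=1$ requires the spectral norm, not the Frobenius norm -- is exactly right and is the only place where care is needed.
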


%-------------------------

\subsection{Duality} \label{subsec:dual}
We start by discussing some results on Lagrange duality, which plays an important role throughout our analysis. We use duality (i) for establishing KKT (Karush-Kuhn-Tucker) conditions at fixed points of \ref{ncadmm}, (ii) for establishing the existence of a low noise regime, (iii) for establishing convergence in low noise regime, and (iv) for explaining the phenomenon of oscillation observed at high noise. 
We first derive the KKT conditions for \ref{orthreg} by writing \ref{orthreg} as a nonlinear program \cite{bert}: 
\begin{equation} 
\label{regprob}
\begin{aligned}
& \underset{\bO_1,\cdots, \bO_M \in \R^{d\times d}}{\min} && \sum_{i,j=1}^M \mathrm{Tr} \left([\bC]_{ij} \bO_j^{\top} \bO_i \right)   \\
&\text{subject to}            
&& \bI_d-\bO_i^{\top} \bO_i = \zer, \qquad i \in [1:M].
\end{aligned}
\end{equation}
The Lagrangian for \eqref{regprob} is
\begin{equation} 
\label{eqn:lag}
\cL((\bO_i),(\Lam_i))= \sum_{i,j=1}^{M} \mathrm{Tr} \left([\bC]_{ij} \bO_j^{\top} \bO_i \right)  + \sum_{i=1}^{M} \mathrm{Tr}(\Lam_i\left( \mathbf{I}_d-\mathbf{O}_i^{\top} \mathbf{O}_i \!)\right),
\end{equation}
where the symmetric matrix $\Lam_i \in \mathbb{R}^{d \times d}$ is the multiplier for the $i$-th equality constraint in \eqref{regprob}.  The KKT point of \eqref{regprob} has the following characterization. (Recall that $\bG$ is the Gram matrix of the $\bO_i$'s, whose $(i,j)$-th block is $[\bG]_{ij} = \bO_i^{\top} \bO_j$.)

\begin{lemma}  \label{lem:kkt} 
	The variables $\bO^\ast_1,\ldots,\bO^\ast_M \in \R^{d\times d}$ constitute a KKT point of \eqref{regprob} if and only if, for $i \in [1:M]$,
	\begin{description}
		\item $(a)$ $[\bG^\ast]_{ii} = \mathbf{I}_d$, and
		\item $(b)$ $[\bC\bG^\ast]_{ii} = [\bG^\ast \bC]_{ii}$.
	\end{description}
	In this case, we say that $\bG^\ast$ is a KKT point of \textup{\ref{orthreg}}.
\end{lemma}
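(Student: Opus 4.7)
The plan is to extract conditions $(a)$ and $(b)$ directly from the standard KKT conditions for the nonlinear program \eqref{regprob}. Primal feasibility, $\bI_d - \bO_i^{*\top}\bO_i^\ast = \zer$, is immediately equivalent to $(a)$ since $[\bG^\ast]_{ii} = \bO_i^{*\top}\bO_i^\ast$, so the real content of the lemma lies in translating the stationarity condition into $(b)$.

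The first technical step is to compute $\nabla_{\bO_k}\cL$ from \eqref{eqn:lag}. Using the block symmetry $[\bC]_{ji} = [\bC]_{ij}^\top$, the gradient of the quadratic objective collapses to $2\sum_{l=1}^M \bO_l [\bC]_{lk}$, which is twice the $k$-th $d \times d$ block of the row-matrix product $\bO\bC \in \R^{d \times Md}$. The gradient of the constraint term $\mathrm{Tr}(\Lam_k(\bI_d - \bO_k^{\top}\bO_k))$ equals $-2\bO_k \Lam_k$, using that $\Lam_k$ is symmetric as the multiplier of a symmetric-matrix equality. Setting $\nabla_{\bO_k}\cL = \zer$ therefore produces the stationarity condition
\[
[\bO^\ast \bC]_k = \bO_k^\ast \Lam_k, \qquad k \in [1:M],
\]
where $[\,\cdot\,]_k$ denotes the $k$-th $d \times d$ block of a $d \times Md$ matrix.

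For the forward direction, I would left-multiply the stationarity identity by $\bO_k^{*\top}$ and use $(a)$ to conclude that $[\bG^\ast \bC]_{kk} = \Lam_k$. Since $\bC$ and $\bG^\ast$ are both symmetric, $(\bG^\ast \bC)^\top = \bC \bG^\ast$, and transposing gives $[\bC \bG^\ast]_{kk} = \Lam_k^\top = \Lam_k$, which is $(b)$.

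For the converse, given $(a)$ and $(b)$, I would set $\Lam_k := [\bG^\ast \bC]_{kk}$. Condition $(b)$ is exactly what is required to make $\Lam_k$ symmetric: $\Lam_k^\top = [\bC\bG^\ast]_{kk}$, and $(b)$ equates this to $\Lam_k$. Because $\bO_k^\ast$ is $d \times d$ and orthogonal under $(a)$, we also have $\bO_k^\ast \bO_k^{*\top} = \bI_d$, so $\bO_k^\ast \Lam_k = \bO_k^\ast \bO_k^{*\top}[\bO^\ast \bC]_k = [\bO^\ast \bC]_k$, which recovers stationarity. The only conceptually subtle point I anticipate is recognizing that $(b)$ is precisely the symmetry requirement on the canonical multiplier $\Lam_k = [\bG^\ast \bC]_{kk}$; once this correspondence is in place, both directions reduce to routine bookkeeping with the block structure.
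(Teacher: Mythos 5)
Your proof is correct and follows essentially the same route as the paper: derive the stationarity equation, left-multiply by $\bO_k^{*\top}$ to identify the multiplier with a diagonal block of $\bG^\ast\bC$, read off condition $(b)$ as the symmetry of that block, and reverse the construction for the converse. The only (immaterial) difference is a sign convention on $\Lam_k$ — the paper takes $\Lam_i^\ast = -[\bG^\ast\bC]_{ii}$ where you take $+[\bG^\ast\bC]_{kk}$ — which does not affect the symmetry argument.
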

\begin{proof}
	For a minimization problem with equality constraints, KKT conditions amount to primal feasibility, and stationarity of Lagrangian \eqref{eqn:lag}  with respect to the primal variables \cite{bert} (i.e., the partial derivative of \eqref{eqn:lag}  with respect to $\bO_i$ should vanish). Primal feasibility gives us condition (a). Vanishing of partial derivative of \eqref{eqn:lag} with respect to $\bO_i$ gives us
	\begin{equation}
	\label{temp}
	\bO_i^\ast \Lam_i^\ast = -\sum_{j=1}^{M} \bO_j^\ast [\bC]_{ji}, \quad i \in [1:M].
	\end{equation}
	Left multiplying by $\bO_i^{\ast\top}$ and using the primal feasibility condition that $\bO_i^{\ast\top} \bO_i^\ast = \bI_d$, we obtain
	\[
	\Lam_i^\ast = -\sum_{j=1}^{M} \bO_i^{\ast\top} \bO_j^\ast [\bC]_{ji}  =  -\sum_{j=1}^{M} [\bG^\ast]_{ij} [\bC]_{ji}  = -[\bG^\ast \bC]_{ii}.
	\]
	Also, note that $\Lam_i^{\ast\top} = -[\bC \bG^\ast]_{ii}$. Since $\Lam_i^\ast$ is symmetric, condition (b) follows immediately. 
	%Conversely, it is not difficult to see that conditions (a) and (b) together imply that $\bG^\ast$ is a stationary point of \ref{sdp}.
	Conversely, given conditions (a) and (b) on $\bG^\ast$, it is not difficult to see that $\Lam_i$'s defined as $\Lam_i = -[\bC \bG^\ast]_{ii}$, and $\bO_i$'s defined such that $\bO_j^\top \bO_k = [\bG^\ast]_{jk} \ \forall j,k \in [1:M]$, together satisfy the KKT conditions.
	\qed
\end{proof}

It is not difficult to check that the gradients of the constraints in \eqref{regprob} are linearly independent. As a result, the KKT conditions necessarily hold at any local minimum of \eqref{regprob} \cite{bert}.
We now give the  KKT conditions for the convex program \ref{csdp}.

\begin{lemma}  \label{lem:kktrel} 
		$\bG^\ast$ is a KKT point of \textup{\ref{csdp}} if there exists a block diagonal matrix $\Lam^\ast \in \S^{Md}$ such that
	\begin{description}
		\item $(a)$ $\bG^\ast \in \S^{Md}_+$, $[\bG^\ast]_{ii} = \bI_d$,
		\item $(b)$ $\bC+\Lam^\ast \in \mathbb{S}_{+}^{Md}$, and
		\item $(c)$ $(\bC+\Lam^\ast)\:\bG^\ast = \zer$.
	\end{description}
	(We call $\Lam^\ast$ the dual variable corresponding to $\bG^\ast$.)
\end{lemma}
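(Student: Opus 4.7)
The plan is to form the Lagrangian for the convex program \ref{csdp} and read off its KKT system, paralleling the treatment of \ref{orthreg} in Lemma \ref{lem:kkt}. To each equality constraint $[\bG]_{ii} = \bI_d$ I attach a symmetric multiplier $\Lam_i \in \S^d$, and to the conic constraint $\bG \in \S^{Md}_+$ I attach a PSD multiplier $\bZ \in \S^{Md}_+$. Collecting the $\Lam_i$ into a block-diagonal matrix $\Lam \in \S^{Md}$ with $[\Lam]_{ii} = \Lam_i$ and zero off-diagonal blocks, and using the identity $\sum_i \mathrm{Tr}(\Lam_i [\bG]_{ii}) = \mathrm{Tr}(\Lam \bG)$, the Lagrangian compactly becomes $\cL(\bG,\Lam,\bZ) = \mathrm{Tr}\bigl((\bC + \Lam - \bZ)\bG\bigr) - \mathrm{Tr}(\Lam)$.

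Next I would extract the KKT conditions and simplify them. Stationarity of $\cL$ in $\bG$ (which is unconstrained once $\bZ$ is introduced) forces $\bZ^\ast = \bC + \Lam^\ast$; primal feasibility yields (a); dual feasibility $\bZ^\ast \in \S^{Md}_+$ translates into (b); and the complementary slackness condition $\mathrm{Tr}(\bZ^\ast \bG^\ast) = 0$, combined with $\bZ^\ast, \bG^\ast \in \S^{Md}_+$, gives $\bZ^\ast \bG^\ast = \zer$ via item~2 of Section \ref{subsec:prelim}, which is precisely (c). Conversely, given any $\Lam^\ast$ satisfying (a)--(c), I would verify the full KKT system by direct substitution with $\bZ^\ast := \bC + \Lam^\ast$.

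The main subtlety to address is why $\Lam^\ast$ comes out block-diagonal rather than a general symmetric matrix in $\S^{Md}$. This is a structural consequence of the fact that the equality constraints couple to $\bG$ only through its diagonal blocks $[\bG]_{ii}$, so the Lagrange multipliers for these constraints naturally assemble into a matrix whose off-diagonal blocks vanish identically. Since \ref{csdp} is a convex SDP for which Slater's condition holds trivially (e.g., $\bG = \bI_{Md}$ is strictly feasible), no further constraint qualification is required, and in fact (a)--(c) are both necessary and sufficient for $\bG^\ast$ to be a global minimizer---a stronger statement than what is claimed, and one that will be useful in the subsequent sections where duality of \ref{csdp} is invoked.
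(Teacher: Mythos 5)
Your proof is correct and follows essentially the same route as the paper: both reduce the claim to the standard KKT system (primal feasibility, dual feasibility, complementary slackness) for the convex SDP, obtain block-diagonality of $\Lam^\ast$ from the fact that the equality constraints touch only the diagonal blocks of $\bG$, and note that Slater's condition makes these conditions necessary and sufficient. Your explicit passage from $\mathrm{Tr}(\bZ^\ast\bG^\ast)=0$ to $\bZ^\ast\bG^\ast=\zer$ via positive semidefiniteness is a small step the paper leaves implicit, but the argument is the same.
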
 
\begin{proof}
	KKT conditions for the convex program \ref{csdp} are just the conditions of primal feasibility, dual feasibility, and complementary slackness \cite{boydbook}.
	Condition (a) is the primal feasibility condition, condition (b) is the dual feasibility condition, and condition (c) is the complementary slackness condition that follows from strong duality \cite{aliza}. Now, to see why $\Lam^\ast$ is block diagonal, we write \ref{csdp} as a standard semidefinite program \cite{aliza}
	\[
	\begin{aligned}
	&\underset{\bG}{\min} \ && \mathrm{Tr} (\bC \bG) \\
	& \mathrm{s.t.} 		&& \mathrm{Tr} (\bA_k \bG) = \b_k, k = [1:m], \\
	&						&& \bG \in \S^{Md}_+.
	\end{aligned}
	\]
	Here $\bA_k \in \S^{Md}$, and $\mathrm{Tr} (\bA_k \bG) = \b_k, k = [1:m]$ collectively encode the condition that $[\bG]_{ii} = \bI_d$. From \cite{aliza}, $\Lam^\ast$ is of the form $\sum_{k=1}^{m} y_k \bA_k$, for some scalars $y_k$, and it is not difficult to see that $\sum_{k=1}^{m} y_k \bA_k$ forms a symmetric block diagonal matrix in this case.
	\qed
\end{proof}

Observe that the identity matrix $\bI_{Md}$ is strictly feasible for \ref{csdp}. Thus, by Slater's condition \cite{boydbook}, the KKT conditions in Lemma \ref{lem:kktrel} are both necessary and sufficient for $\bG^\ast$ to be a global minimizer of \ref{csdp} \cite{boydbook}. Note that we refer to $\Lam^\ast$ as \emph{the} dual variable corresponding to $\bG^\ast$: this is because, given a global minimizer $\bG^\ast$, there is a unique block diagonal matrix $\Lam^\ast$ satisying conditions in Lemma \ref{lem:kktrel}. This is formalized in the following proposition, which expresses dual optimum in terms of the primal optimum. The proof is deferred to the Appendix.

\begin{proposition} \label{prop:dualform}
	Let $\bG^\ast$ and $\Lam^\ast$ be as in Lemma \ref{lem:kktrel}. Then
	\begin{equation}
	[\Lam^\ast]_{ii} = -[\bC \bG^\ast]_{ii}, \: i \in [1:M].
	\end{equation}
\end{proposition}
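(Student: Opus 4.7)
The plan is to exploit the complementary slackness condition (c) from Lemma \ref{lem:kktrel} together with the block-diagonal structure of $\Lam^\ast$ and the normalization $[\bG^\ast]_{ii} = \bI_d$ from condition (a). The identity will drop out from reading off the diagonal blocks of a single matrix equation, so there is no real obstacle---the work is entirely at the level of block-matrix bookkeeping.

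Concretely, I would start from condition (c), namely $(\bC + \Lam^\ast)\bG^\ast = \zer$, and expand it block by block: for every $i \in [1:M]$ the $(i,i)$-th block of the left-hand side is
\begin{equation*}
[\bC\bG^\ast]_{ii} + [\Lam^\ast \bG^\ast]_{ii} = \zer.
\end{equation*}
Since $\Lam^\ast$ is block diagonal, block multiplication collapses to
\begin{equation*}
[\Lam^\ast \bG^\ast]_{ii} = [\Lam^\ast]_{ii} \, [\bG^\ast]_{ii},
\end{equation*}
and by condition (a) the diagonal block $[\bG^\ast]_{ii}$ equals $\bI_d$. Substituting gives $[\Lam^\ast]_{ii} = -[\bC\bG^\ast]_{ii}$, which is the claimed formula.

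As a byproduct this also justifies the uniqueness remark preceding the proposition: the off-diagonal blocks of $\Lam^\ast$ vanish by definition of block-diagonality, and the formula pins down each diagonal block in terms of $\bG^\ast$ and the (fixed) data matrix $\bC$. The only step that deserves a moment's care is the expansion of $[\Lam^\ast \bG^\ast]_{ii}$, but the block-diagonal structure of $\Lam^\ast$ makes this immediate.
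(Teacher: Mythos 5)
Your proposal is correct and follows essentially the same route as the paper: both start from the complementary slackness condition $(\bC+\Lam^\ast)\bG^\ast = \zer$, use the block-diagonality of $\Lam^\ast$ to reduce $[\Lam^\ast\bG^\ast]_{ii}$ to $[\Lam^\ast]_{ii}[\bG^\ast]_{ii}$, and invoke $[\bG^\ast]_{ii}=\bI_d$ to read off the formula. No gaps.
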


Finally, we consider the dual variables generated by \ref{cadmm} and \ref{ncadmm}. For both these algorithms, we will assume that the dual initialization $\Lam^0$ is block diagonal.

\begin{proposition} \label{prop:lamblockdiag}
	Let $\Lam^0$ be a block diagonal matrix in \textup{\ref{ncadmm}} (or \textup{\ref{cadmm}}). Then
	\begin{enumerate} [(i)]
		\item $\Lam^k$ is block diagonal at every $k \geq 1$;
		\item The $\bH$-update step in \eqref{ncadmmproj} reduces to 
		\begin{equation} \label{hupdate1}
		\bH^{k+1} = \Pi_\Theta \left( \bG^{k+1} \right).
		\end{equation}
	\end{enumerate}
\end{proposition}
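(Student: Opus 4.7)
My plan is to prove the two claims together by induction on $k$, with the key observation being that the projection $\Pi_\Theta$ acts only on the diagonal blocks of its argument: it replaces each diagonal block $[\bA]_{ii}$ with $\bI_d$ and leaves every off-diagonal block $[\bA]_{ij}$ ($i \neq j$) unchanged. In particular, for any block diagonal $\bD \in \S^{Md}$ and any $\bA \in \S^{Md}$, we have $\Pi_\Theta(\bA + \bD) = \Pi_\Theta(\bA)$, because $\bD$ contributes nothing to the off-diagonal blocks and the diagonal blocks are overwritten by $\bI_d$ regardless.

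With this observation in hand, I would run the induction as follows. The base case $k=0$ holds by assumption. For the inductive step, assume $\Lam^k$ is block diagonal. Then by the observation above, the $\bH$-update in \eqref{ncadmmproj} satisfies
\begin{equation*}
\bH^{k+1} = \Pi_\Theta\bigl(\bG^{k+1} + \rho^{-1}\Lam^k\bigr) = \Pi_\Theta\bigl(\bG^{k+1}\bigr),
\end{equation*}
which is precisely the reduction in (ii) and shows that $[\bH^{k+1}]_{ij} = [\bG^{k+1}]_{ij}$ for all $i \neq j$, while $[\bH^{k+1}]_{ii} = \bI_d$. Consequently $\bG^{k+1} - \bH^{k+1}$ has all its off-diagonal blocks equal to zero, i.e.\ it is block diagonal. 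Since the dual update is $\Lam^{k+1} = \Lam^k + \rho(\bG^{k+1} - \bH^{k+1})$ and a sum of block diagonal matrices is block diagonal, we conclude $\Lam^{k+1}$ is block diagonal, completing the induction for (i). Claim (ii) has been established along the way, as it holds for every $k$ by the just-proven block-diagonality of $\Lam^k$.

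I do not anticipate any substantive obstacle; the whole proof hinges on the elementary block-wise nature of $\Pi_\Theta$, and the argument applies verbatim to both \ref{ncadmm} and \ref{cadmm} since the dual and $\bH$-update steps are identical in the two algorithms (the difference being only the feasible set in the $\bG$-update, which plays no role here). The only care needed is to state once and for all that the projection $\Pi_\Theta$ can be described explicitly as overwriting diagonal blocks with $\bI_d$ while leaving off-diagonal blocks fixed; this fact was already noted in the paragraph following \eqref{ncadmmproj}, so I can cite it rather than re-derive it.
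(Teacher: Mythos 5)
Your proof is correct and follows essentially the same route as the paper's: induction on $k$, using the fact that $\Pi_\Theta$ only overwrites diagonal blocks so a block diagonal $\Lam^k$ drops out of the $\bH$-update, which in turn makes $\bG^{k+1}-\bH^{k+1}$ block diagonal and closes the induction. The observation that the argument transfers verbatim to \ref{cadmm} because only the $\bG$-update differs is also exactly the paper's remark.
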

The proof is deferred to the Appendix.

%------------------

\subsection{Fixed Point Analysis} \label{subsec:fixed}
The first result we prove is that any fixed point of \ref{ncadmm} is a stationary (KKT) point of \ref{orthreg}. Before we do so, we clarify what we mean by a ``fixed point''. 

\begin{definition}
	Suppose $\cT$ is an iterative procedure initialized with $\z^0$, and working as follows: $\z^{k+1} = \cT(\z^k)$. Then, $\z^{k_0}$ is a \emph{fixed point} of $\cT$ if $\z^{k_0} = \cT(\z^{k_0})$.
\end{definition}

In other words, the iterative procedure stops updating the variables when the fixed point is reached. We now characterize fixed point of \ref{ncadmm}.

\begin{proposition} \label{prop:fixedpt}
	Suppose \textup{\ref{ncadmm}} is initialized with $(\bH^0, \Lam^0)$, where $\Lam^0$ is a block diagonal matrix. Then $(\bH^{k_0}, \Lam^{k_0})$ is a fixed point of \textup{\ref{ncadmm}} if and only if $\bG^{k_0+1} = \bH^{k_0}$.
\end{proposition}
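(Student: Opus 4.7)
The plan is to unpack the definition of ``fixed point'' directly in terms of the three updates of \ref{ncadmm}, and then to use Proposition \ref{prop:lamblockdiag} to simplify the $\bH$-update, which is what makes the equivalence clean.

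Since $\Lam^0$ is block diagonal, Proposition \ref{prop:lamblockdiag} guarantees that $\Lam^k$ is block diagonal for every $k\ge 0$, and that the $\bH$-update reduces to $\bH^{k+1} = \Pi_\Theta(\bG^{k+1})$. By the definition of a fixed point, $(\bH^{k_0},\Lam^{k_0})$ is fixed iff applying the block $(\bG\text{-update},\bH\text{-update},\Lam\text{-update})$ to $(\bH^{k_0},\Lam^{k_0})$ yields $(\bH^{k_0},\Lam^{k_0})$ again, i.e.\ iff $\bH^{k_0+1}=\bH^{k_0}$ and $\Lam^{k_0+1}=\Lam^{k_0}$.

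For the ``only if'' direction, the dual update $\Lam^{k_0+1}=\Lam^{k_0}+\rho(\bG^{k_0+1}-\bH^{k_0+1})$ combined with $\Lam^{k_0+1}=\Lam^{k_0}$ and $\rho>0$ forces $\bG^{k_0+1}=\bH^{k_0+1}$; and since $\bH^{k_0+1}=\bH^{k_0}$, we conclude $\bG^{k_0+1}=\bH^{k_0}$. Conversely, assume $\bG^{k_0+1}=\bH^{k_0}$. Because $\bH^{k_0}\in\Theta$ (it is the output of $\Pi_\Theta$ from the previous iteration, or is so by assumption on the initialization), the projection acts trivially on it, so
\[
\bH^{k_0+1}=\Pi_\Theta\!\left(\bG^{k_0+1}\right)=\Pi_\Theta\!\left(\bH^{k_0}\right)=\bH^{k_0}.
\]
Feeding this into the dual update gives $\Lam^{k_0+1}=\Lam^{k_0}+\rho(\bG^{k_0+1}-\bH^{k_0+1})=\Lam^{k_0}$, which completes the proof.

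There is no real obstacle here; the entire argument is a one-line manipulation of the ADMM updates once the $\bH$-update has been simplified via Proposition \ref{prop:lamblockdiag}. The only point requiring any care is making sure $\bH^{k_0}\in\Theta$ so that $\Pi_\Theta$ is the identity on it, which is automatic for $k_0\ge 1$ and an implicit initialization hypothesis for $k_0=0$.
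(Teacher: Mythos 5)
Your proof is correct and follows essentially the same route as the paper: both directions reduce to reading off the dual update and using the simplified $\bH$-update $\bH^{k_0+1}=\Pi_\Theta(\bG^{k_0+1})$ from Proposition \ref{prop:lamblockdiag}, with the observation that $\bG^{k_0+1}=\bH^{k_0}\in\Theta$ makes the projection trivial. Your explicit remark that $\bH^0\in\Theta$ must be assumed for the $k_0=0$ case is a small point of care the paper leaves implicit, but it is not a substantive difference.
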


\begin{proof}
	If $\bG^{k_0+1} = \bH^{k_0}$, then clearly $[\bG^{k_0+1}]_{ii} = \bI_d$. This means that $\bH^{k_0+1} = \Pi_\Theta \left( \bG^{k_0+1} \right) = \bG^{k_0+1} = \bH^{k_0}$. This, in turn, implies that $\Lam^{k_0+1} = \Lam^{k_0} + \rho (\bG^{k_0+1} - \bH^{k_0+1}) = \Lam^{k_0}$. Thus, we have that  $(\bH^{k_0+1}, \Lam^{k_0+1}) = (\bH^{k_0}, \Lam^{k_0})$.
	
	Conversely, if $(\bH^{k_0}, \Lam^{k_0})$ is a fixed point of \ref{ncadmm}, then by definition $(\bH^{k_0+1}, \Lam^{k_0+1}) = (\bH^{k_0}, \Lam^{k_0})$. Now, $\Lam^{k_0+1} = \Lam^{k_0}$ implies that $\bG^{k_0+1} - \bH^{k_0+1} = \zer$. Thus, we have that $\bG^{k_0+1} = \bH^{k_0+1} = \bH^{k_0}$.
	\qed
\end{proof}

We now prove the following theorem, which, combined with Proposition \ref{prop:fixedpt}, implies stationarity of any fixed point of \ref{ncadmm}.

\begin{theorem} \label{thm:fixpt}
Suppose \textup{\ref{ncadmm}} is initialized with $(\bH^0, \Lam^0)$, where $\Lam^0$ is a block diagonal matrix. Suppose $\bG^{k_0+1}=\bH^{k_0}$. Then $\bH^{k_0}$ is a KKT point of \textup{\ref{orthreg}}.
\end{theorem}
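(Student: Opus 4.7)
My strategy is to verify the two conditions of Lemma \ref{lem:kkt} at $\bH^{k_0}$, namely $[\bH^{k_0}]_{ii} = \bI_d$ and $[\bC\bH^{k_0}]_{ii} = [\bH^{k_0}\bC]_{ii}$ for every $i \in [1:M]$. First I would observe that the fixed-point hypothesis, combined with Proposition \ref{prop:lamblockdiag}, yields $\bH^{k_0+1} = \Pi_\Theta(\bG^{k_0+1}) = \bH^{k_0}$, so $\bH^{k_0} \in \Theta$ and condition (a) is immediate. Moreover, $\bG^{k_0+1} = \bH^{k_0}$ together with $\bG^{k_0+1} \in \Omega$ gives $\bH^{k_0} \in \S^{Md}_+$ with $\mathrm{rank}(\bH^{k_0}) \leq d$, which combined with Proposition \ref{prop:Gprop}(a) forces $\mathrm{rank}(\bH^{k_0}) = d$; Proposition \ref{prop:Gprop}(c) then guarantees that every nonzero eigenvalue of $\bH^{k_0}$ equals $M$. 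Pick an orthonormal basis $\u_1,\ldots,\u_d$ of the rangespace of $\bH^{k_0}$, so that $\bH^{k_0} = M \sum_{i=1}^d \u_i \u_i^\top$ and $\bH^{k_0}\u_i = M\u_i$.

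Next I would decode the $\bG$-update at the fixed point. Set $\mathbf{M} := \bH^{k_0} - \rho^{-1}(\bC + \Lam^{k_0})$, which is symmetric because $\Lam^{k_0}$ is block-diagonal and symmetric (Proposition \ref{prop:lamblockdiag}). The fixed-point identity reads $\Pi_\Omega(\mathbf{M}) = \bH^{k_0}$. Applying the explicit formula \eqref{ncadmmproj} for $\Pi_\Omega$, the nonzero eigenvalues of $\Pi_\Omega(\mathbf{M})$ are the top-$d$ eigenvalues of $\mathbf{M}$ clipped at $0$. Since $\bH^{k_0}$ has rank exactly $d$ with every nonzero eigenvalue equal to $M > 0$, this forces the top $d$ eigenvalues of $\mathbf{M}$ to each equal $M$, with corresponding eigenspace coinciding with the rangespace of $\bH^{k_0}$. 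Hence $\mathbf{M}\u_i = M\u_i = \bH^{k_0}\u_i$ for $i \in [1:d]$, which after substituting the definition of $\mathbf{M}$ gives $(\bC + \Lam^{k_0})\u_i = \zer$. Right-multiplying by $\u_i^\top$ and summing over $i$ then yields $(\bC + \Lam^{k_0})\,\bH^{k_0} = \zer$, i.e.\ $\bC\bH^{k_0} = -\Lam^{k_0}\bH^{k_0}$.

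Finally I would take the $(i,i)$-block of this identity. Since $\Lam^{k_0}$ is block-diagonal and $[\bH^{k_0}]_{ii} = \bI_d$, we get $[\Lam^{k_0}\bH^{k_0}]_{ii} = [\Lam^{k_0}]_{ii}$, which is symmetric; hence $[\bC\bH^{k_0}]_{ii}$ is symmetric, and therefore equals its transpose $[\bH^{k_0}\bC]_{ii}$, establishing condition (b). The main obstacle in this plan is the spectral claim in the middle paragraph: inferring from $\Pi_\Omega(\mathbf{M}) = \bH^{k_0}$ that $\mathbf{M}$ and $\bH^{k_0}$ actually agree on the rangespace of $\bH^{k_0}$ requires care because $\mathbf{M}$ may have eigenvalue ties. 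The key observation is that every eigenvector of $\bH^{k_0}$ with eigenvalue $M > 0$ must be among the top-$d$ eigenvectors retained by the projection, and since $M > 0$ the $\max(\cdot,0)$ clipping is inactive there, forcing the matching eigenvalues of $\mathbf{M}$ to be exactly $M$.
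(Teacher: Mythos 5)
Your proof is correct and follows essentially the same route as the paper's: both extract the identity $(\bC+\Lam^{k_0})\bH^{k_0}=\zer$ from the fixed-point form of the $\bG$-update and then conclude via the block-diagonality of $\Lam^{k_0}$ and Lemma \ref{lem:kkt}. The only difference is cosmetic: where you match eigenvalues of $\bH^{k_0}-\rho^{-1}(\bC+\Lam^{k_0})$ against those of $\bH^{k_0}$ (using Proposition \ref{prop:Gprop}(c)), the paper obtains the same identity more directly by observing that the residual of the top-$d$ projection has rangespace orthogonal to that of the projection itself.
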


\begin{proof}
	Recall that $\bG^{k_0+1}$ is formed from top-$d$ eigendecomposition of $\bH^{k_0} - \rho^{-1} \left(\bC+\Lam^{k_0}\right)$. This implies that the rangespace of $\left( \bH^{k_0} - \rho^{-1} \left(\bC+\Lam^{k_0}\right) - \bG^{k_0+1} \right)$ is orthogonal to rangespace of $\bG^{k_0+1}$, which gives us
	\begin{equation} \label{optimality}
	\left( \bH^{k_0} - \rho^{-1} \left(\bC+\Lam^{k_0}\right) - \bG^{k_0+1} \right) \bG^{k_0+1} = \zer.
	\end{equation}
	Or,	
	\begin{equation} \label{fixedpt}
	\left( \bH^{k_0} - \bG^{k_0+1} \right) \bG^{k_0+1} =  \rho^{-1} \left(\bC+\Lam^{k_0}\right) \bG^{k_0+1}.
	\end{equation}
	Since $\bG^{k_0+1}=\bH^{k_0}$, the left hand side of \eqref{fixedpt} is $\zer$, and we have
	\begin{equation} \label{eqn:kkt}
	\begin{aligned}
	&\left(\bC+\Lam^{k_0}\right) \bH^{k_0} = \zer. \\
	\Rightarrow \quad & \bC \bH^{k_0} \rightarrow -\Lam^{k_0} \bH^{k_0}. \\
	\Rightarrow \quad & [\bC \bH^{k_0}]_{ii} \rightarrow -[\Lam^{k_0} \bH^{k_0}]_{ii}.
	\end{aligned}
	\end{equation}
	Observe that $[\Lam^{k_0} \bH^{k_0}]_{ii}$ is symmetric (since $\Lam^{k_0}$ is block diagonal, and $[\bH^{k_0}]_{ii} = \bI_d$). Thus, we have that $[\bC \bH^{k_0}]_{ii}$ is symmetric. Using Lemma \ref{lem:kkt}, we can now conclude that $\bH^{k_0}$ is a KKT point of \ref{orthreg}.
	\qed
\end{proof}

%------------------

\subsection{Existence of Low Noise Regime: Tightness of \ref{csdp}} \label{subsec:tightness}
In Section \ref{subsec:lonoise}, we will derive convergence results for \ref{ncadmm} in the low noise regime. Recall from our discussion in Section \ref{subsec:contri}, that we say we are in the low noise regime when the convex relaxation \ref{csdp} is tight; that is, when global minimizer of \ref{csdp} is also a global minimizer of \ref{sdp}. Before we prove convergence results for \ref{ncadmm} in the low noise regime, it is imperative that we theoretically establish the empirical observation (Fig. \ref{fig:phasetran}) that there is indeed a noise threshold below which \ref{csdp} is tight. This is the task for this section. 
%In particular, Theorem \ref{thm:tight} shows that under mild assumptions on $\bC$, the rank of global minimizer of \ref{csdp} is $d$ (and hence \ref{csdp} is tight) when the noise level is below a certain threshold $\eta$. 
Recall that the data matrix $\mathbf{C}$ in \ref{sdp} ultimately depends on the local coordinate measurements $\x_{k,i}$, $i \in [1:M]$, $k \in P_i$. If these measurements are exact, we say that $\mathbf{C}$ is clean; otherwise, we say that $\mathbf{C}$ is noisy. 

Our analysis for tightness of \ref{sdp} relies on the assumption that nullity of the clean data matrix is $d$. Before stating the main results of this section, we examine some properties of the clean data matrix that will help demystify this assumption.
To do so, we rely on our discussion in Section \ref{subsec:probstat}. Let $\bar{\x}_1,\ldots,\bar{\x}_N \in \R^d$ be the true coordinates of the $N$ nodes in some global coordinate system. Let the local coordinate system associated with patch $P_i$ be related to the global coordinate system via rigid transform $(\bar{\bO}_i,\bar{\t}_i)$, $i \in [1:M]$. It follows from equation \eqref{exact} that
\begin{equation} 
\sum_{i=1}^{M} \sum_{k \in P_i} \norm{\bar{\x}_k -  \left( \bar{\bO}_i \x_{k,i} + \bar{\t}_i \right) }^2 = 0.
\end{equation}
Let $\bC_0$ be the clean data matrix formed from exact local coordinate measurements. Define $\bG_0$ be defined by 
\begin{equation} \label{g0}
[\bG_0]_{ij} = \bar{\bO}_i^\top \bar{\bO}_j, \quad i,j \in [1:M]. 
\end{equation}
Then, from \eqref{equiv}, and from the fact that \ref{sdp} is an exact reformulation of \ref{orthreg}, we get that
\begin{equation} \label{trcg}
\mathrm{Tr}\left( \bC_0 \bG_0 \right) = 0.
\end{equation}
Now, since $\bC_0, \bG_0 \in \S^{Md}_+$, we get that
\begin{equation} \label{cg}
\bC_0 \bG_0 = \zer.
\end{equation}

From \eqref{cg} and the fact that $\mathrm{rank}(\bG_0) = d$, it follows that $\mathrm{nullity}(\bC_0) \geq d$. The assumption we make on $\bC_0$ is that $\mathrm{nullity}(\bC_0) = d$. This is equivalent to the condition that the \emph{body graph} corresponding to the registration problem is \emph{affinely rigid} \cite{knc}. Loosely speaking, the registration problem is well-posed if $\mathrm{nullity}(\bC_0) = d$. For example, this is true when each patch contains all the $N$ points (the setup for the simulations in Section \ref{subsec:numexp}). More generally, $\mathrm{nullity}(\bC_0) = d$ if there exists an ordering of the patches such that $P_1$ contains at least $d+1$ points, and $P_i$ has at least $d+1$ points in common with $P_1 \cup P_2 \cup \cdots \cup P_{i-1}$, for $i \geq 2$ \cite{knc} (this occurs naturally in applications like multiview registration \cite{miraj}).

We now state a result that we require to prove the main result of this section (Theorem \ref{thm:tight}). This result is also of independent interest in the context of stability of \ref{csdp}. In particular, the lemma quantifies the perturbation in the optimum of \ref{csdp} due to a perturbation in $\bC_0$. We defer the proof to the Appendix.

\begin{lemma} \label{lem:stable}
	Suppose $\mathrm{nullity}(\bC_0) = d$. Let $\bC$ be the corresponding noisy data matrix, and let $\bW = \bC - \bC_0$. Let $\bG^\ast$ be a global optimum for \textup{\ref{csdp}} corresponding to the noisy data $\bC$. Let $\Delta = \bG^\ast - \bG_0$, where $\bG_0$ is the ground-truth solution. Then
	\[
	\norm{\Del} \leq \frac{2M}{\lambda_{d+1}(\bC_0)} \norm{\bW}.
	\]
\end{lemma}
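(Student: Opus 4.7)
The plan is to combine the optimality of $\bG^\ast$ for the noisy \textup{\ref{csdp}} with a spectral lower bound on $\mathrm{Tr}(\bC_0 \Del)$ coming from the nullity hypothesis on $\bC_0$. First, I would observe that $\bG_0$ is itself a global minimizer of \textup{\ref{csdp}} with clean data $\bC_0$: primal feasibility follows from \eqref{g0}, and the value $\mathrm{Tr}(\bC_0 \bG_0) = 0$ given by \eqref{trcg} is optimal since $\bC_0 \succeq 0$ and $\bG \succeq 0$ force $\mathrm{Tr}(\bC_0 \bG) \geq 0$ on the feasible set. Using optimality of $\bG^\ast$ for the noisy problem, $\bC = \bC_0 + \bW$, and $\mathrm{Tr}(\bC_0 \bG_0) = 0$, a short rearrangement followed by Cauchy--Schwarz in the Frobenius inner product yields
\[
\mathrm{Tr}(\bC_0 \Del) \;\leq\; -\mathrm{Tr}(\bW \Del) \;\leq\; \norm{\bW}\,\norm{\Del}.
\]

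Next, I would set $V := \mathrm{null}(\bC_0)$ and let $\bP, \bP^\perp$ denote the orthogonal projectors onto $V$ and $V^\perp$. From \eqref{cg}, the identity $\mathrm{rank}(\bG_0) = d$, and the assumption $\mathrm{nullity}(\bC_0) = d$, the inclusion $\mathrm{range}(\bG_0) \subseteq V$ is actually an equality, so $\bC_0 \bP = \zer$. A standard spectral inequality then gives $\bC_0 \succeq \lambda_{d+1}(\bC_0)\, \bP^\perp$, whence
\[
\mathrm{Tr}(\bC_0 \Del) \;=\; \mathrm{Tr}(\bC_0 \bG^\ast) \;\geq\; \lambda_{d+1}(\bC_0)\, t, \qquad t := \mathrm{Tr}\bigl(\bP^\perp \bG^\ast \bP^\perp\bigr) \geq 0.
\]

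The crux of the argument is the matching upper bound $\norm{\Del}^2 \leq 2Mt$. I would expand $\norm{\Del}^2 = \norm{\bG^\ast}^2 - 2\mathrm{Tr}(\bG^\ast \bG_0) + \norm{\bG_0}^2$ and invoke Proposition~\ref{prop:Gprop}(c), which gives $\bG_0 = M\bP$, to simplify $\norm{\bG_0}^2 = M^2 d$ and $\mathrm{Tr}(\bG^\ast \bG_0) = M(Md - t)$. The nontrivial ingredient, and the main obstacle, is the eigenvalue estimate $\lambda_{\max}(\bG^\ast) \leq M$, which is not listed in Proposition~\ref{prop:Gprop} but is in fact a feature of every matrix that is PSD with $\bI_d$ on its diagonal blocks. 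I would prove it by factoring $\bG^\ast = \bU^\top \bU$ and partitioning $\bU = (\bU_1 \ \cdots \ \bU_M)$ so that $\bU_i^\top \bU_i = \bI_d$ (hence each $\bU_i$ has orthonormal columns), and then applying the triangle inequality followed by Cauchy--Schwarz to $\v^\top \bG^\ast \v = \bigl\|\sum_i \bU_i \v_i\bigr\|^2$ for any unit $\v = (\v_1; \ldots; \v_M)$. This gives $\norm{\bG^\ast}^2 \leq \lambda_{\max}(\bG^\ast)\, \mathrm{Tr}(\bG^\ast) \leq M^2 d$, and substitution into the expansion collapses the right-hand side to exactly $2Mt$.

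Combining the lower and upper ends produces $\lambda_{d+1}(\bC_0)\, \norm{\Del}^2 \leq 2M\,\norm{\bW}\,\norm{\Del}$, and dividing by $\norm{\Del}$ (the conclusion being trivial when $\Del = \zer$) yields the asserted inequality.
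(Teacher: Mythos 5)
Your proof is correct and follows essentially the same route as the paper's: both arguments sandwich $\mathrm{Tr}(\bC_0\bG^\ast)$ between $\norm{\bW}\,\norm{\Del}$ (optimality plus Cauchy--Schwarz) and $\frac{\lambda_{d+1}(\bC_0)}{2M}\norm{\Del}^2$ (spectral gap of $\bC_0$ applied to the mass of $\bG^\ast$ off $\mathrm{null}(\bC_0)$, which is your $t$ and the paper's $\sum_i \alpha_i\norm{\h_i}^2$). The only real divergence is how you get $\norm{\bG^\ast}^2 \leq M^2d$ --- you prove the extra fact $\lambda_{\max}(\bG^\ast)\leq M$ and use $\norm{\bG^\ast}^2 \leq \lambda_{\max}(\bG^\ast)\,\mathrm{Tr}(\bG^\ast)$, whereas the paper simply sums the blockwise bound of Proposition~\ref{prop:Gprop}(b); both are valid.
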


We now state the main result of this section concerning tightness of \ref{csdp}.

\begin{theorem} \label{thm:tight}
	Suppose $\mathrm{nullity}(\bC_0) = d$. Let $\bC = \bC_0 + \bW$ be the noisy data matrix. Then there exists (explicitly computable) $\eta > 0$ such that $\mathrm{rank}(\bG^\ast) = d$ (i.e. \textup{\ref{csdp}} is tight) if $\norm{\bW} < \eta$.
\end{theorem}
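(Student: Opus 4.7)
The plan is to combine the stability estimate of Lemma \ref{lem:stable} with the KKT characterization of Lemma \ref{lem:kktrel} and the explicit dual formula of Proposition \ref{prop:dualform} to argue that when $\bW$ is small the dual optimum $\Lam^\ast$ is also small, and hence $\bC+\Lam^\ast$ remains a small perturbation of $\bC_0$. A Weyl's inequality argument will then pin down the nullity of $\bC+\Lam^\ast$ at exactly $d$, which, together with complementary slackness, will force $\mathrm{rank}(\bG^\ast) = d$.

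First, I would note that in the clean case ($\bW = \zer$), Proposition \ref{prop:dualform} together with \eqref{cg} yields $\Lam^\ast_0 = \zer$, so $\bC_0 + \Lam^\ast_0 = \bC_0$ has nullity exactly $d$. For the noisy case, since $\Lam^\ast$ is block diagonal, Proposition \ref{prop:dualform} gives
\[
\norm{\Lam^\ast}^2 \;=\; \sum_{i=1}^M \norm{[\Lam^\ast]_{ii}}^2 \;=\; \sum_{i=1}^M \norm{[\bC\bG^\ast]_{ii}}^2 \;\leq\; \norm{\bC\bG^\ast}^2.
\]
Writing $\bG^\ast = \bG_0 + \Del$ and $\bC = \bC_0 + \bW$ and using $\bC_0 \bG_0 = \zer$, together with $\norm{\bG_0}_2 = M$ (Proposition \ref{prop:Gprop}(c)) and $\norm{\Del} \leq \tfrac{2M}{\lambda_{d+1}(\bC_0)}\norm{\bW}$ from Lemma \ref{lem:stable}, the triangle inequality would give
\[
\norm{\bC\bG^\ast} \;\leq\; \norm{\bC_0}_2\norm{\Del} + M\norm{\bW} + \norm{\bW}\norm{\Del} \;\leq\; K_1 \norm{\bW} + K_2\norm{\bW}^2,
\]
with $K_1, K_2$ explicit in $\norm{\bC_0}_2$, $\lambda_{d+1}(\bC_0)$ and $M$. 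Setting $\bE := \bW + \Lam^\ast$, this yields $\norm{\bE}_2 \leq \norm{\bE} \leq (1+K_1)\norm{\bW} + K_2 \norm{\bW}^2$.

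Now I would apply Weyl's theorem to $\bC + \Lam^\ast = \bC_0 + \bE$: each eigenvalue of $\bC + \Lam^\ast$ lies within $\norm{\bE}_2$ of the corresponding eigenvalue of $\bC_0$. Since $\lambda_1(\bC_0) = \cdots = \lambda_d(\bC_0) = 0$ and $\lambda_{d+1}(\bC_0) > 0$ by hypothesis, choosing $\eta > 0$ so that $(1+K_1)\eta + K_2\eta^2 < \lambda_{d+1}(\bC_0)$ ensures $\lambda_{d+1}(\bC + \Lam^\ast) > 0$ whenever $\norm{\bW} < \eta$, i.e.\ $\mathrm{nullity}(\bC + \Lam^\ast) \leq d$. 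On the other hand, the complementary slackness condition $(\bC + \Lam^\ast)\bG^\ast = \zer$ from Lemma \ref{lem:kktrel}(c), combined with $\mathrm{rank}(\bG^\ast) \geq d$ from Proposition \ref{prop:Gprop}(a) and fact~(1) of Section \ref{subsec:prelim}, gives $\mathrm{nullity}(\bC + \Lam^\ast) \geq d$. Hence $\mathrm{nullity}(\bC + \Lam^\ast) = d$, and applying fact~(1) in the reverse direction yields $\mathrm{rank}(\bG^\ast) \leq d$, so $\mathrm{rank}(\bG^\ast) = d$. The main obstacle is the control $\norm{\Lam^\ast} = O(\norm{\bW})$; once this is in place, the Weyl step is essentially routine, and the explicit threshold $\eta$ is obtained simply by solving the quadratic inequality above.
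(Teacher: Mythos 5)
Your proposal is correct and follows essentially the same route as the paper: reduce tightness to showing $\mathrm{nullity}(\bC+\Lam^\ast)=d$, write $\bC+\Lam^\ast$ as $\bC_0$ plus a perturbation controlled via Proposition \ref{prop:dualform} and Lemma \ref{lem:stable}, and finish with Weyl's theorem. The only (cosmetic) difference is that you bound $\norm{\Lam^\ast}$ separately and split $\bG^\ast=\bG_0+\Del$ inside $\bC\bG^\ast$, picking up a harmless quadratic term in $\norm{\bW}$, whereas the paper bounds $\norm{\bW\bG^\ast}\leq M\sqrt{d}\,\norm{\bW}$ directly to get a purely linear threshold.
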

\begin{proof}
	Let $\Lam^\ast$ be the dual corresponding to $\bG^\ast$. 
	From the fact that $(\bC + \Lam^\ast) \bG^\ast = \zer$ (Lemma \ref{lem:kktrel}(b)), and that $\mathrm{rank}(\bG^\ast) \geq d$ (Proposition \ref{prop:Gprop}(a)), we have that $\mathrm{nullity}(\bC + \Lam^\ast) \geq d$. 
	This also tells us the following: if we prove $\mathrm{nullity}(\bC + \Lam^\ast) = d$, then it necessarily follows that $\mathrm{rank}(\bG^\ast) = d$, implying that \ref{csdp} is tight.
	Thus, our goal is to prove that there exists $\eta > 0$ such that $\mathrm{nullity}(\bC + \Lam^\ast) = d$ if $\norm{\bW} < \eta$.
	
	From Proposition \ref{prop:dualform}, we have that
	\[	
	\begin{aligned}
	\bC + \Lam^\ast 
	&= \bC - \mathrm{bd}(\bC \bG^\ast) \\
	&= \bC_0 + \bW - \mathrm{bd}((\bC_0 + \bW) \bG^\ast) \\
	&= \bC_0 + \bW - \mathrm{bd}(\bC_0 \bG^\ast) - \mathrm{bd}(\bW \bG^\ast) \\
	&= \bC_0 + \underbrace{\bW - \mathrm{bd}(\bC_0 (\bG^\ast - \bG_0)) - \mathrm{bd}(\bW \bG^\ast)}_\text{$\bA$}\\
	\end{aligned}
	\]
	Here $\mathrm{bd}: \R^{Md \times Md} \rightarrow \R^{Md \times Md}$ is the linear operator that leaves the diagonal blocks untouched, and sets every other element to $0$. To get the last equality, we have added a superfluous term $\mathrm{bd}(\bC_0 \bG_0)$. We can do this because $\bC_0 \bG_0 = \zer$, see \eqref{cg}. 
	
	Thus, in summary, $\bC + \Lam^\ast = \bC_0 + \bA$, where $\bA = \bW - \mathrm{bd}(\bW \bG^\ast) - \mathrm{bd}(\bC_0 \Del)$, and $\Del = \bG^\ast - \bG_0$ is the perturbation in the optimum of \ref{csdp} due to pertubation in the data matrix. Now, by Weyl's theorem,
	\begin{equation} \label{weyl}
	\lambda_{d+1}(\bC + \Lam^\ast) \geq \lambda_{d+1}(\bC_0) - \norm{\bA}_2.
	\end{equation}
	We have already established that $\mathrm{nullity}(\bC + \Lam^\ast) \geq d$. This implies that the bottom $d$ eigenvalues of $(\bC + \Lam^\ast)$ are guaranteed to be $0$, i.e., $\lambda_{1}(\bC + \Lam^\ast) = \cdots = \lambda_{d}(\bC + \Lam^\ast) = 0$. To prove $\mathrm{nullity}(\bC + \Lam^\ast) = d$, we need to show that $\lambda_{d+1}(\bC + \Lam^\ast) > 0$. We can do this using \eqref{weyl}, if we upperbound $\norm{\bA}_2$.
	\[
	\begin{aligned}
	\norm{\bA}_2 
	&= \norm{\bW - \mathrm{bd}(\bW \bG^\ast) - \mathrm{bd}(\bC_0 \Del)}_2 \\
	&\leq \norm{\bW} + \norm{\mathrm{bd}(\bW \bG^\ast)} + \norm{\mathrm{bd}(\bC_0 \Del)} \\
	&\leq \norm{\bW} + \norm{\bW \bG^\ast} + \norm{\bC_0 \Del} \\
	&\leq \norm{\bW} \left( 1 + \norm{\bG^\ast} \right) + \norm{\bC_0}\,\norm{\Del},
	\end{aligned}
	\]
	where we have used the results that $\norm{\cdot}_2 \leq \norm{\cdot}$, that $\norm{\cdot}$ obeys triangle inequality, that $\norm{\mathrm{bd}(\cdot)} \leq \norm{\cdot}$, and that $\norm{\cdot}$ is sub-multiplicative \cite{bhatia}. We now need an upper bound on $\norm{\Del}$, which is the magnitude of perturbation in the optimum of \ref{csdp} when the data matrix is perturbed from $\bC_0$ to $\bC = \bC_0 + \bW$. This is where we use Lemma \ref{lem:stable}, which gives us
	\[
	\norm{\Del} \leq \frac{2M}{\lambda_{d+1}(\bC_0)} \norm{\bW}.
	\]
	Thus, we have
	\[
	\begin{aligned}
	\norm{\bA}_2 
	&\leq \norm{\bW} \, \left(1 + \norm{\bG^\ast} + \frac{2M}{\lambda_{d+1}(\bC_0)} \norm{\bC_0}\right) \\
	&\leq \norm{\bW} \, \left(1 + M \sqrt{d} + \frac{2M}{\lambda_{d+1}(\bC_0)} \norm{\bC_0}\right),
	\end{aligned}
	\]		
	where, the fact that $\norm{\bG^\ast} \leq M \sqrt{d}$ follows from Proposition \ref{prop:Gprop}(b). The terms in the paranthesis are constant. Thus, by controlling $\norm{\bW}$, we can control $\norm{\bA}_2$. In particular, if
	\begin{equation} \label{eta}
	\norm{\bW} < \eta = \frac{\lambda_{d+1}(\bC_0)}{\left(1 + M \sqrt{d} + \frac{2M}{\lambda_{d+1}(\bC_0)} \norm{\bC_0}\right)},
	\end{equation}	
	then $\lambda_{d+1}(\bC_0) - \norm{\bA}_2 > 0$. This, from equation \eqref{weyl}, implies that $\lambda_{d+1}(\bC + \Lam^\ast) > 0$.
	\qed
\end{proof}	 

When the noise is low enough for the convex relaxation \ref{csdp} to be tight, we say that we are in the \emph{low noise regime}. Theorem \ref{thm:tight} establishes the validity of this notion; that is, we do have a non-zero noise level $\eta$, given by \eqref{eta}, below which \ref{csdp} is tight.

%----------------

\subsection{Convergence in Low Noise Regime} \label{subsec:lonoise}

We start with a result on \ref{cadmm}, which we will leverage to get results for \ref{ncadmm} in the low noise regime.
Suppose $\bA^k = \bH^k - \rho^{-1}(\bC + \Lam^k)$, $\bA^* = \bH^* - \rho^{-1}(\bC + \Lam^*)$, where $\bH^\ast$ is a global minimum for the convex relaxation \ref{csdp}, and $\Lam^\ast$ is the corresponding optimal dual as in Lemma \ref{lem:kktrel}.

\begin{lemma} \label{lem:cadmm}
	For \textup{\ref{cadmm}}, $\norm{\bA^{k+1} - \bA^*}^2 \leq \norm{\bA^{k} - \bA^*}^2$, for every $k \geq 0$.
\end{lemma}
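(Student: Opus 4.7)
The plan is to view the map $\bA^k \mapsto \bA^{k+1}$ as a firmly nonexpansive operator by exploiting the orthogonal block structure of $\S^{Md}$ induced by the block-diagonal initialization of $\Lam$. Set $\bU^k = \rho^{-1}\Lam^k$ and $\bU^\ast = \rho^{-1}\Lam^\ast$, and note $\bG^\ast = \bH^\ast$ since $\bH^\ast$ solves the primal \ref{csdp}. By Proposition \ref{prop:lamblockdiag}(i), $\Lam^k$ is block diagonal for every $k$, and by Lemma \ref{lem:kktrel} so is $\Lam^\ast$; meanwhile $\bH^k, \bH^\ast \in \Theta$ share identical diagonal blocks, so $\bH^k - \bH^\ast$ is block off-diagonal. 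Orthogonality of these two subspaces under the Frobenius inner product (and cancellation of the $\bC/\rho$ term) then yields the Pythagorean identity
\[
\norm{\bA^k - \bA^\ast}^2 = \norm{\bH^k - \bH^\ast}^2 + \norm{\bU^k - \bU^\ast}^2,
\]
with the analogous identity at step $k+1$.

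Next, Proposition \ref{prop:lamblockdiag}(ii) gives $\bH^{k+1} = \Pi_\Theta(\bG^{k+1})$, a map that leaves the off-diagonal blocks of $\bG^{k+1}$ untouched and overwrites the diagonal blocks with $\bI_d$. Writing $\mathrm{od}$ and $\mathrm{bd}$ for the orthogonal projections onto the block-off-diagonal and block-diagonal subspaces, and using $\bG^\ast = \bH^\ast$, it follows that $\bH^{k+1} - \bH^\ast = \mathrm{od}(\bG^{k+1} - \bG^\ast)$ and $\bG^{k+1} - \bH^{k+1} = \mathrm{bd}(\bG^{k+1} - \bG^\ast) = \bU^{k+1} - \bU^k$, the last equality coming from the dual update. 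Orthogonality once more yields
\[
\norm{\bG^{k+1} - \bG^\ast}^2 = \norm{\bH^{k+1} - \bH^\ast}^2 + \norm{\bU^{k+1} - \bU^k}^2.
\]

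The main ingredient is firm nonexpansiveness of $\Pi_{\S^{Md}_+}$ applied to $\bG^{k+1} = \Pi_{\S^{Md}_+}(\bA^k)$ and $\bG^\ast = \Pi_{\S^{Md}_+}(\bA^\ast)$:
\[
\norm{\bG^{k+1} - \bG^\ast}^2 + \norm{(\bA^k - \bG^{k+1}) - (\bA^\ast - \bG^\ast)}^2 \leq \norm{\bA^k - \bA^\ast}^2.
\]
A brief rearrangement, decomposing $\bH^k - \bG^{k+1}$ into its block-off-diagonal piece $\bH^k - \bH^{k+1}$ and its block-diagonal piece $-(\bU^{k+1} - \bU^k)$ using the two identities above, gives $(\bA^k - \bG^{k+1}) - (\bA^\ast - \bG^\ast) = (\bH^k - \bH^{k+1}) - (\bU^{k+1} - \bU^\ast)$, and orthogonality of the summands splits its squared norm as $\norm{\bH^k - \bH^{k+1}}^2 + \norm{\bU^{k+1} - \bU^\ast}^2$. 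Substituting everything back and invoking the Pythagorean identity at $k+1$ produces
\[
\norm{\bA^{k+1} - \bA^\ast}^2 + \norm{\bH^k - \bH^{k+1}}^2 + \norm{\bU^{k+1} - \bU^k}^2 \leq \norm{\bA^k - \bA^\ast}^2,
\]
from which the lemma follows by discarding the two nonnegative slack terms.

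The principal obstacle is the algebraic rewrite of $(\bA^k - \bG^{k+1}) - (\bA^\ast - \bG^\ast)$: one must split $\bH^k - \bG^{k+1}$ correctly along the block-diagonal/off-diagonal decomposition using $\bH^{k+1} = \Pi_\Theta(\bG^{k+1})$ and the dual update. Once this identity is in hand, the rest is a mechanical application of orthogonality, and the argument routes everything through firm nonexpansiveness of the PSD projection, bypassing the usual monotone-operator or Lyapunov-style convergence machinery for ADMM.
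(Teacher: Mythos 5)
Your proof is correct, and it reaches the same telescoping inequality as the paper (your slack term $\norm{\bU^{k+1}-\bU^k}^2$ equals the paper's $\norm{\bG^{k+1}-\bH^{k+1}}^2$ via the dual update), but the packaging of the key step is genuinely different. The paper follows the Boyd-style argument: it expands $\norm{\bA^k-\bA^*}^2-\norm{\bA^{k+1}-\bA^*}^2$ into four terms and certifies nonnegativity of the last two separately --- one by the variational inequality for the projection of $\bA^k$ onto $\S^{Md}_+$, the other by the saddle-point property of the Lagrangian at the optimum (strong duality). You instead treat $\bA^k$ and $\bA^*$ symmetrically and invoke firm nonexpansiveness of $\Pi_{\S^{Md}_+}$ at the pair $(\bA^k,\bA^*)$, which bundles both variational inequalities into one standard fact; the block-diagonal/off-diagonal orthogonality then does the same bookkeeping in both proofs. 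What your route buys is a cleaner, more modular argument (and it makes transparent why the same mechanism reappears in Theorem \ref{thm:lownoise}, where the PSD projection locally agrees with the rank-constrained one); what it costs is one step you left implicit and should spell out: firm nonexpansiveness is only applicable once you know $\bG^\ast=\Pi_{\S^{Md}_+}(\bA^\ast)$. This is true but is exactly where optimality enters --- by Lemma \ref{lem:kktrel}, $\bA^\ast-\bG^\ast=-\rho^{-1}(\bC+\Lam^\ast)$ with $\bC+\Lam^\ast\in\S^{Md}_+$ and $(\bC+\Lam^\ast)\bG^\ast=\zer$, so $\dotp{\bA^\ast-\bG^\ast,\bZ-\bG^\ast}=-\rho^{-1}\dotp{\bC+\Lam^\ast,\bZ}\leq 0$ for all $\bZ\in\S^{Md}_+$, which is the defining variational inequality of the projection. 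With that one line added, your argument is complete and is an equivalent, arguably tidier, substitute for the paper's proof.
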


Proof of Lemma \ref{lem:cadmm} is deferred to the Appendix. 
We now state our result on convergence of \ref{ncadmm} in the low noise regime ($\bW$ and $\eta$ are as in Theorem \ref{thm:tight}).

\begin{theorem} \label{thm:lownoise}
	Suppose $\norm{\bW} < \eta$, which implies that the convex relaxation is tight, i.e., $\mathrm{rank}(\bG^\ast) = d$, where $\bG^\ast$ is a global optimum for \textup{\ref{csdp}}. Let $\Lam^\ast$ be the corresponding optimum dual variable as in Lemma \ref{lem:kktrel}. Suppose $\bH^0$ and $\Lam^0$ are such that
	\begin{equation} \label{lonoiseinit}
	\norm{\bH^0 - \bG^\ast}^2 + \frac{1}{\rho^2} \norm{\Lam^0 - \Lam^\ast}^2 \leq \frac{1}{\rho^2} \, \lambda^2_{d+1}(\bS^\ast).
	\end{equation}
	Then \textup{\ref{ncadmm}}  converges to the global optimum, that is, $\bH^k \rightarrow \bG^\ast$ and $\Lam^k \rightarrow \Lam^\ast$ as $k \rightarrow \infty$.
\end{theorem}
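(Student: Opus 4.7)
My plan is to show that, under the stated initialization, the iterates $(\bH^k, \Lam^k)$ produced by \ref{ncadmm} coincide exactly with those produced by \ref{cadmm} from the same $(\bH^0, \Lam^0)$. Since \ref{cadmm} is known to converge to $(\bG^\ast, \Lam^\ast)$ \cite{kncspl}, this immediately yields convergence of \ref{ncadmm}. The only place where the two algorithms can differ is the $\bG$-update: \ref{ncadmm} projects onto the nonconvex set $\Omega$ (top $d$ positive eigenvalues), while \ref{cadmm} projects onto $\S^{Md}_+$ (all positive eigenvalues). The task therefore reduces to showing that the projection argument $\bA^k := \bH^k - \rho^{-1}(\bC + \Lam^k)$ has at most $d$ positive eigenvalues at every iteration $k$, so that $\Pi_\Omega(\bA^k) = \Pi_{\S^{Md}_+}(\bA^k)$ and the two updates agree.

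\textbf{Key ingredients.} Write $\bS^\ast := \bC + \Lam^\ast$ and $\bA^\ast := \bG^\ast - \rho^{-1} \bS^\ast$. I will exploit two structural observations. First, an \emph{orthogonality decomposition}: since $\bH^k$ and $\bG^\ast$ both carry $\bI_d$ on their diagonal blocks, while $\Lam^k$ and $\Lam^\ast$ are block-diagonal (by Proposition \ref{prop:lamblockdiag} and Lemma \ref{lem:kktrel}), the matrices $\bH^k - \bG^\ast$ and $\Lam^k - \Lam^\ast$ have disjoint Frobenius supports, so
\[
\norm{\bA^k - \bA^\ast}^2 \ = \ \norm{\bH^k - \bG^\ast}^2 \ + \ \rho^{-2} \norm{\Lam^k - \Lam^\ast}^2,
\]
making the hypothesis \eqref{lonoiseinit} exactly the bound $\norm{\bA^0 - \bA^\ast} \leq \rho^{-1} \lambda_{d+1}(\bS^\ast)$. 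Second, the \emph{spectral structure of $\bA^\ast$}: complementary slackness $\bS^\ast \bG^\ast = \zer$ combined with positive semidefiniteness makes $\bG^\ast$ and $\bS^\ast$ simultaneously diagonalizable (preliminary fact 4); by Proposition \ref{prop:Gprop}(c) the nonzero eigenvalues of $\bG^\ast$ are all equal to $M$, and the null-space of $\bG^\ast$ carries the range of $\bS^\ast$. Hence, in nondecreasing order, $\lambda_{Md-d}(\bA^\ast) = -\rho^{-1}\lambda_{d+1}(\bS^\ast) < 0$ while $\lambda_{Md-d+1}(\bA^\ast) = \cdots = \lambda_{Md}(\bA^\ast) = M$.

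\textbf{Induction and main obstacle.} I will induct on $k$, showing that the \ref{ncadmm} and \ref{cadmm} iterates coincide. Assuming coincidence through step $k$, the common iterate satisfies, by Lemma \ref{lem:cadmm}, $\norm{\bA^k - \bA^\ast} \leq \norm{\bA^0 - \bA^\ast} \leq \rho^{-1} \lambda_{d+1}(\bS^\ast)$. Applying Weyl's inequality with the spectral description of $\bA^\ast$ yields
\[
\lambda_{Md-d}(\bA^k) \ \leq \ \lambda_{Md-d}(\bA^\ast) + \norm{\bA^k - \bA^\ast}_2 \ \leq \ -\rho^{-1}\lambda_{d+1}(\bS^\ast) + \rho^{-1}\lambda_{d+1}(\bS^\ast) \ = \ 0,
\]
so $\bA^k$ has at most $d$ positive eigenvalues, forcing $\Pi_\Omega(\bA^k) = \Pi_{\S^{Md}_+}(\bA^k)$. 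The $\bH$- and $\Lam$-updates are identical across the two algorithms (the $\bH$-update reduces to $\Pi_\Theta(\bG^{k+1})$ in both cases by Proposition \ref{prop:lamblockdiag}), which closes the induction. Convergence $(\bH^k, \Lam^k) \rightarrow (\bG^\ast, \Lam^\ast)$ then follows from the known convergence of \ref{cadmm}. The main obstacle is the careful pairing of the orthogonality identity with the eigenvalue gap of $\bA^\ast$ so that the initialization radius on the right of \eqref{lonoiseinit} lines up exactly with the Weyl slack at $\lambda_{Md-d}(\bA^\ast)$; once these two ingredients are set up, the induction becomes a direct application of Weyl's inequality and Lemma \ref{lem:cadmm}.
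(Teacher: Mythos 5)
Your proposal is correct and follows essentially the same route as the paper's own proof: identify the spectral gap of $\bA^\ast = \bG^\ast - \rho^{-1}(\bC+\Lam^\ast)$, use the hypothesis \eqref{lonoiseinit} (via the block-support orthogonality of $\bH^k-\bG^\ast$ and $\Lam^k-\Lam^\ast$) to place $\bA^0$ in a ball of radius $\rho^{-1}\lambda_{d+1}(\bC+\Lam^\ast)$ around $\bA^\ast$, invoke Lemma \ref{lem:cadmm} to keep all iterates in that ball, and conclude via Weyl that $\Pi_\Omega$ and $\Pi_{\S^{Md}_+}$ agree at every step, so the \ref{ncadmm} iterates coincide with the convergent \ref{cadmm} iterates. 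Your write-up is if anything slightly more explicit than the paper's in spelling out the induction and in making precise why \eqref{lonoiseinit} is exactly the ball condition $\norm{\bA^0-\bA^\ast}\le\rho^{-1}\lambda_{d+1}(\bC+\Lam^\ast)$.
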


\begin{proof}
	Since $\norm{\bW} < \eta$, we have that $\mathrm{nullity}(\bC + \Lam^\ast) = d$ (see proof of Theorem \ref{thm:tight}), which, in particular implies that $\lambda_{d+1}(\bC + \Lam^\ast) > 0$. We now deduce the eigenvalues of $\bA^\ast$. From KKT condition (a) in Lemma \ref{lem:kktrel}, we have that 
	\[
	(\bC + \Lam^\ast)\: \bG^\ast = \zer.
	\]
	In particular, this means that $\bC + \Lam^\ast$, and $\bG^\ast$ are simultaneously diagonalizable, and since their product is $\zer$, their rangespaces are orthogonal. Now, since $\mathrm{rank}(\bG^\ast) = d$, the non-zero eigenvalues of $\bG^\ast$ are all $M$ (Proposition \ref{prop:Gprop}(c)). Moreover, from KKT condition (b) in Lemma \ref{lem:kktrel}, we have that $\bC + \Lam^\ast \in \S^{Md}_+$. Putting everything together, we have that
	\begin{itemize}
		\item $\bA^\ast$ has $d$ positive eigenvalues, each of them equal to $M$.
		\item $\bA^\ast$ has $(M-1)d$ negative eigenvalues, which are the negative of the non-zero eigenvalues of $\rho^{-1} (\bC + \Lam^\ast)$.
	\end{itemize}
	Let
	\[
	\cB^\ast := \{ \bA \in \S^{Md} : \norm{\bA - \bA^\ast} < \rho^{-1} \lambda_{d+1}(\bC + \Lam^\ast) \}.
	\]	
	By Weyl's theorem, we deduce that for any $\bA \in \cB^\ast$, the top $d$ eigenvalues of $\bA$ would lie in the interval $\left( M -\rho^{-1} \lambda_{d+1}(\bC + \Lam^\ast), M + \rho^{-1} \lambda_{d+1}(\bC + \Lam^\ast) \right)$, and other eigenvalues of $\bA$ would lie in the interval $\big( -\rho^{-1} \left( \lambda_{Md}(\bC + \Lam^\ast) + \lambda_{d+1}(\bC + \Lam^\ast) \right), 0 \big)$. In other words, for any $\bA \in \cB^\ast$, only the top $d$ eigenvalues of $\bA$ would be nonnegative.
	Let
	\[
	\bA^k = \bH^k - \rho^{-1} \left( \bC + \Lam^k \right).
	\]
	Suppose $\bA^k \in \cB^\ast$. Then, because only the top $d$ eigenvalues of $\bA^k$ are nonnegative, we have
	\[
	\bG^{k+1} = \Pi_\Omega \left( \bA^k \right) = \Pi_{\S^{Md}_+} \left( \bA^k \right).
	\]
	That is, projection on the nonconvex set $\Omega$ is same as projection on the convex set $\S^{Md}_+$. 
	
	Now, from Lemma \ref{lem:cadmm} we infer that, if $\bA^k \in \cB^\ast$, then $\bA^{k+1} \in \cB^\ast$. Thus, every subsequent projection on $\Omega$ is equivalent to projection on $\S^{Md}_+$. Note that condition \eqref{lonoiseinit} in the hypothesis of the theorem implies that $\bA^0 \in \cB^\ast$. Thus, the iterates generated by \ref{ncadmm} initialized with $\bH_0$, $\Lam_0$ is the same as the iterates generated by \ref{cadmm} initialized with $\bH_0$, $\Lam_0$. Now, since \ref{cadmm} converges to global optimum \cite{kncspl}, we deduce that \ref{ncadmm} converges to global optimum.
	\qed
\end{proof}

In summary, when the conditions in Theorem \ref{thm:lownoise} are satisfied, \ref{ncadmm} and \ref{cadmm} generate the same iterates, which allows us to infer the convergence of \ref{ncadmm} from that  of \ref{cadmm}. Thus, we reap the convergence benefits of \ref{cadmm}, while retaining the computational advantages of \ref{ncadmm} (see discussion in Section \ref{subsec:cadmm}).

Observe that Theorem \ref{thm:lownoise} implies a tradeoff, namely, if we intialize the dual sufficiently close to the optimal, we can be lax with the primal initialization. This principle is brought to the fore in the clean case, where we know that the dual optimum $\Lam^\ast = \zer$ (this is because, in the clean case, $\bG_0$ is the primal global optimum, and $\bC_0 \bG_0 = \zer$; see \eqref{cg} and Proposition \ref{prop:dualform}).

\begin{theorem} \label{thm:cleancase}
	Let $\mathrm{nullity}(\bC_0) = d$ and $\Lam^0 = \zer$. Then given any primal initialization $\bH^0$, \textup{\ref{ncadmm}} converges to global optimum provided
	\[
	\rho \leq \frac{\lambda_{d+1}(\bC_0)}{\sqrt{\norm{\bH^0}^2 + 2M \sqrt{d} \, \norm{\bH^0} + M^2 d}}.
	\]
\end{theorem}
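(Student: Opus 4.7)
The plan is to reduce this theorem to a direct application of Theorem \ref{thm:lownoise} by specializing all the relevant quantities to the clean setting. First, I would identify the primal and dual optima. Since $\mathrm{nullity}(\bC_0) = d$, we are in the low noise regime (indeed, with $\bW = \zer$), so \ref{csdp} is tight. Because $\bC_0 \bG_0 = \zer$ (equation \eqref{cg}) and $\bG_0 \in \S^{Md}_+$ with $[\bG_0]_{ii} = \bI_d$ and $\mathrm{rank}(\bG_0) = d$, the matrix $\bG_0$ is the primal optimum $\bG^\ast$ for \ref{csdp} (and for \ref{sdp}). By Proposition \ref{prop:dualform} the corresponding dual satisfies $[\Lam^\ast]_{ii} = -[\bC_0 \bG_0]_{ii} = \zer$, and since $\Lam^\ast$ is block diagonal (Lemma \ref{lem:kktrel}), we conclude $\Lam^\ast = \zer$. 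Therefore $\bC + \Lam^\ast = \bC_0$, and the relevant eigenvalue appearing in Theorem \ref{thm:lownoise} is simply $\lambda_{d+1}(\bC_0)$, which is strictly positive by the nullity hypothesis.

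Next, with $\Lam^0 = \zer = \Lam^\ast$, the initialization condition \eqref{lonoiseinit} collapses to
\begin{equation*}
\norm{\bH^0 - \bG^\ast}^2 \leq \frac{1}{\rho^2}\,\lambda^2_{d+1}(\bC_0),
\end{equation*}
or equivalently $\rho \,\norm{\bH^0 - \bG^\ast} \leq \lambda_{d+1}(\bC_0)$. To remove the (unknown) $\bG^\ast$ from this inequality, I would apply the triangle inequality $\norm{\bH^0 - \bG^\ast} \leq \norm{\bH^0} + \norm{\bG^\ast}$ and then bound $\norm{\bG^\ast}$ using Proposition \ref{prop:Gprop}(b): each of the $M^2$ blocks $[\bG^\ast]_{ij}$ has spectral norm at most $1$, so its $d$ singular values each satisfy $\sigma \leq 1$, giving $\norm{[\bG^\ast]_{ij}}^2 \leq d$ and hence $\norm{\bG^\ast}^2 \leq M^2 d$, i.e.\ $\norm{\bG^\ast} \leq M\sqrt{d}$. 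Squaring the triangle inequality then yields
\begin{equation*}
\norm{\bH^0 - \bG^\ast}^2 \;\leq\; \bigl(\norm{\bH^0} + M\sqrt{d}\bigr)^2 \;=\; \norm{\bH^0}^2 + 2M\sqrt{d}\,\norm{\bH^0} + M^2 d.
\end{equation*}

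Finally, combining these two inequalities, the choice of $\rho$ prescribed in the theorem statement guarantees
\begin{equation*}
\rho^2\bigl(\norm{\bH^0}^2 + 2M\sqrt{d}\,\norm{\bH^0} + M^2 d\bigr) \;\leq\; \lambda^2_{d+1}(\bC_0),
\end{equation*}
which in turn implies the initialization condition \eqref{lonoiseinit}. Theorem \ref{thm:lownoise} then delivers convergence of \ref{ncadmm} to the global optimum. There is no real obstacle here since the hard work has already been done in Theorem \ref{thm:lownoise}; the only care point is the conservative (but explicit) upper bound $\norm{\bG^\ast} \leq M\sqrt{d}$, which is what produces the precise form of the denominator in the stated $\rho$-threshold.
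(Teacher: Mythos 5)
Your proposal is correct and follows essentially the same route as the paper: reduce to Theorem \ref{thm:lownoise} with $\Lam^\ast = \zer$ in the clean case, so condition \eqref{lonoiseinit} collapses to $\norm{\bH^0 - \bG^\ast}^2 \leq \rho^{-2}\lambda^2_{d+1}(\bC_0)$, and then eliminate $\bG^\ast$ via the triangle inequality and the bound $\norm{\bG^\ast} \leq M\sqrt{d}$ from Proposition \ref{prop:Gprop}. Your justification of $\Lam^\ast = \zer$ via Proposition \ref{prop:dualform} and of the norm bound is exactly the reasoning the paper records (partly in the discussion preceding the theorem), so there is nothing further to add.
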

\begin{proof}
	For the clean case, $\Lam^\ast = \zer$. Thus, with $\Lam^0 = \zer$, the condition in Theorem \ref{thm:lownoise} reduces to 
	\[
	\norm{\bH^0 - \bG^\ast}^2 \leq \frac{1}{\rho^2} \, \lambda^2_{d+1}(\bC_0).
	\]
	Now,
	\[
	\norm{\bH^0 - \bG^\ast}^2 \leq \norm{\bH^0}^2 + \norm{\bG^\ast}^2 + 2\norm{\bH^0} \, \norm{\bG^\ast}.
	\]
	Since $\norm{\bG^\ast} = M \sqrt{d}$, we have
	\[
	\norm{\bH^0 - \bG^\ast}^2 \leq \norm{\bH^0}^2 + 2M \sqrt{d} \, \norm{\bH^0} + M^2 d.
	\]
	Thus, the algorithm would converge to the global optimum if
	\[
	\norm{\bH^0}^2 + 2M \sqrt{d} \, \norm{\bH^0} + M^2 d \leq \frac{1}{\rho^2} \, \lambda^2_{d+1}(\bC_0),
	\]
	which proves the result.
	\qed
\end{proof}

In Fig.  \ref{fig:cleanrho}, \ref{ncadmm} gets stuck in a local optimum for $\rho = 1$, but converges to the global optimum if  $\rho$ is set using Theorem \ref{thm:cleancase}. The result also sheds light on the robustness of the algorithm to primal initializations, as observed in \cite{miraj}. Note that we have an upper bound on $\rho$, in contrast to existing results in the literature which prescribe a lower bound. This phenomenon is peculiar to low noise data. In contrast, small values of $\rho$ in the high noise regime may lead to non-attenuating oscillations (Fig. \ref{fig:hinoiserho}).

%-----------------

\subsection{Oscillations in High Noise Regime} \label{subsec:osc}
Before discussing oscillations in the high-noise regime, we will derive a necessary condition that a fixed-point of \ref{ncadmm} must satisfy.

\begin{proposition} \label{prop:fixedpt1}
	Suppose $(\bH^\ast,\Lam^\ast)$ is a fixed-point of \textup{\ref{ncadmm}} algorithm with parameter $\rho$. Then $(\bC + \Lam^\ast) \bH^\ast = \zer$.
\end{proposition}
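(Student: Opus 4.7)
The plan is to mimic the reasoning used in the proof of Theorem \ref{thm:fixpt}, but without invoking the block-diagonal structure of $\Lam^\ast$ (which was only needed there for the symmetry step leading to the KKT characterization). The conclusion $(\bC+\Lam^\ast)\bH^\ast = \zer$ is really just the optimality condition of the $\bG$-update step, rewritten at the fixed point.

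First I would translate the fixed-point assumption into a relation between the implicit $\bG$-iterate and $\bH^\ast$. Starting from $(\bH^\ast,\Lam^\ast)$, one sweep of \ref{ncadmm} produces some $\bG^{k+1}$, $\bH^{k+1}$, $\Lam^{k+1}$, and by the definition of a fixed point we must have $\Lam^{k+1}=\Lam^\ast$. Examining the dual update $\Lam^{k+1}=\Lam^k+\rho(\bG^{k+1}-\bH^{k+1})$ immediately forces $\bG^{k+1}=\bH^{k+1}$. The $\bH$-update (projection onto $\Theta$) then guarantees that $\bH^{k+1}$ has identity diagonal blocks, and the full fixed-point condition $\bH^{k+1}=\bH^\ast$ gives $\bG^{k+1}=\bH^\ast$. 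This is the analogue of Proposition \ref{prop:fixedpt}, but established without the block-diagonal initialization.

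Next I would invoke the optimality condition for the $\bG$-update. Since $\bG^{k+1}=\Pi_\Omega(\bH^\ast - \rho^{-1}(\bC+\Lam^\ast))$ is formed from the top-$d$ positive eigenpairs, the rangespace of the residual $\bH^\ast - \rho^{-1}(\bC+\Lam^\ast) - \bG^{k+1}$ is orthogonal to the rangespace of $\bG^{k+1}$. This is exactly identity \eqref{optimality} from the proof of Theorem \ref{thm:fixpt}, and yields
\[
\bigl(\bH^\ast - \rho^{-1}(\bC+\Lam^\ast) - \bG^{k+1}\bigr)\,\bG^{k+1} = \zer.
\]
Substituting $\bG^{k+1}=\bH^\ast$ collapses the left factor to $-\rho^{-1}(\bC+\Lam^\ast)$, and multiplying through by $-\rho$ produces the desired $(\bC+\Lam^\ast)\bH^\ast=\zer$.

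There is really no serious obstacle: both ingredients (the consensus consequence $\bG=\bH$ at a fixed point, and the orthogonality of the projection residual onto $\Omega$) are already present in the earlier fixed-point analysis. The only subtlety worth flagging is that we no longer pass through the symmetry argument for $[\bC\bH^\ast]_{ii}$, since we are not claiming a KKT characterization here—just the primal-dual identity $(\bC+\Lam^\ast)\bH^\ast=\zer$, which is weaker and holds without the block-diagonal assumption on $\Lam^\ast$.
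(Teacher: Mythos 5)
Your proposal is correct and follows essentially the same route as the paper: the paper also runs one sweep of \ref{ncadmm} from $(\bH^\ast,\Lam^\ast)$, concludes $\bG^1=\bH^\ast$ (it cites Proposition \ref{prop:fixedpt} for this, whose relevant direction is exactly the dual-update argument you rederive inline), and then applies the same rangespace-orthogonality identity \eqref{optimality} for the projection onto $\Omega$ before substituting $\bG^1=\bH^\ast$. Your observation that the block-diagonality of $\Lam^\ast$ is not needed for this weaker conclusion is accurate and consistent with how the paper argues.
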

\begin{proof}
	Suppose we initialize \ref{ncadmm} with $\bH^0 = \bH^\ast$, $\Lam^0 = \Lam^\ast$. Then, by Proposition \ref{prop:fixedpt}, $\bG^1 = \bH^0$. Now, since $\bG^1$ is formed from top-$d$ eigendecomposition of $\bH^0 - \rho^{-1} \left(\bC+\Lam^0\right)$ (see \eqref{ncadmmproj}), the range space of $\left( \bH^0 - \rho^{-1} \left(\bC+\Lam^0\right) - \bG^1 \right)$ is orthogonal to range space of $\bG^1$, which gives us
	\begin{equation} 
	\left( \bH^0 - \rho^{-1} \left(\bC+\Lam^0\right) - \bG^1 \right) \bG^1 = \zer.
	\end{equation}
	Or,	
	\[
	\rho^{-1} \left(\bC+\Lam^0\right) \bG^1 = \left( \bH^0 - \bG^1 \right) \bG^1.
	\]
	Since $\bG^1 = \bH^0 \ (= \bH^\ast)$, the right hand side of the preceding equation is $\zer$, and we obtain the desired result.
	\qed
\end{proof}

Let us now focus on the high-noise regime. Suppose \ref{ncadmm} is initialized with $(\bH^0,\Lam^0)$ such that $\bH^0$ is feasible for \ref{sdp}, $\Lam^0$ is block-diagonal, and $(\bC + \Lam^0) \bH^0 = \zer$.
In the high-noise regime (i.e., when the convex relaxation is not tight), $\bC + \Lam^0$ must have at least one negative eigenvalue (say $-\mu^2$). Indeed, if $\bC + \Lam^0$ were positive semidefinite, it would follow from Lemma \ref{lem:kktrel} that the convex relaxation is tight. Now, the $\bG$-update is
\[
\bG^{1} = \Pi_\Omega \left( \bH^0 - \rho^{-1} (\bC + \Lam^0) \right).
\]
From Proposition \ref{prop:Gprop}, we know that the $d$ non-zero eigenvalues of $\bH^0$ are $M$. Now, observe that $-\rho^{-1} (\bC + \Lam^0)$ has a positive eigenvalue $\rho^{-1}\mu^2$. If $\rho$ is sufficiently small, we would have $\rho^{-1}\mu^2 > M$. Then, since $\bG^{1}$ is determined by the top $d$ eigenvalues of $\bH^0 - \rho^{-1} (\bC + \Lam^0)$, we would have that the top eigenvalue of $\bG^{1}$ is strictly bigger than $M$, making $\bG^{1}$ infeasible for \ref{sdp} (since, by Proposition \ref{prop:Gprop}, any $\bG$ feasible for \ref{sdp} necessarily has all non-zero eigenvalues equal to $M$). This would imply that $\bH^1 \neq \bG^1$, and consequently, that $\Lam^1 \neq \Lam^0$. That is, for small value of $\rho$, we see that \ref{ncadmm} does not stabilize even when $\bH^0$ and $\Lam^0$ satisfy $(\bC + \Lam^0) \bH^0 = \zer$, a property that any candidate for a fixed-point of \ref{ncadmm} must satisfy. Put differently, there can be no fixed-point to which \ref{ncadmm} converges if $\rho$ is sufficiently small.

Observe that the argument above depended on the existence of a negative eigenvalue of $\bC + \Lam^0$. This argument does not hold in the low-noise regime because we can simultaneously have the properties that $(\bC + \Lam^0) \bH^0 = \zer$, and that all the eigenvalues of $\bC + \Lam^0$ are nonnegative (which holds when $\bH^0$ is global optimum; see condition (b) in Lemma \ref{lem:kktrel}). This suggests why the instability is not observed for low values of $\rho$ in the low-noise regime.

\section{Conclusion} \label{sec:concl}
In this work, we analyzed the convergence behavior of \ref{ncadmm} in different noise regimes. Existing results on nonconvex ADMM do not apply to \ref{ncadmm} as they rely on certain smoothness assumptions that are not satisfied by \ref{sdp}. We bypassed these assumptions, and exploited the tightness phenomenon of convex relaxation to guide our analysis. We started with a fixed point analysis, where we proved that any fixed point of \ref{ncadmm} converges to a stationary (KKT) point of \ref{sdp}. To further refine the result, we looked at the behavior of \ref{ncadmm} when the noise is low. In particular, we precisely defined  what is meant by ``low'' noise by establishing tightness of the convex relaxation \ref{csdp} below a certain noise threshold. We then proved that, by initializing the primal and dual variables sufficiently close to the optimum, the iterates of \ref{ncadmm} are guaranteed to converge to the global optimum. By applying this result to the clean case, we showed that given \emph{any} primal initialization, we can explicitly compute values of $\rho$ for which the algorithm converges to the global optimum. For high noise, we showed that for sufficiently small $\rho$, the iterates generated by \ref{ncadmm} do not stabilize, even if initialization of \ref{ncadmm} satisfies necessary property of a fixed-point.

%----------------------------------------------------------------------

\section{Appendix}

\subsection{Proof of Proposition \ref{prop:Gprop}}
	$(a)$ 
	Consider the first $d$ columns of $\bG$. Since $\bG_{11} = \bI_d$, we conclude that the first $d$ columns of $\bG$ are linearly independent.
	
	$(b)$	
	Let $\x_i$, $\x_j \in \R^d$, $\norm{\x_i} = \norm{\x_j} = 1$ . Regard any vector in $\R^{Md}$ as consisting of $M$ vectors in $\R^d$ stacked vertically. Construct $\x \in \R^{Md}$ with $i$-th block as $\x_i$, $j$-th block as $-\x_j$, and every other block as $\zer$. Then
	\[
	\x^\top \bG \x = \norm{\x_i}^2 + \norm{\x_j}^2 - 2 \x_i^\top [\bG]_{ij} \x_j,
	\]
	where we have used the fact that $[\bG]_{ji} = [\bG]_{ij}^\top$. Since $\bG \in \S^{Md}_+$, we get that
	\[
	\norm{\x_i}^2 + \norm{\x_j}^2 - 2 \x_i^\top [\bG]_{ij} \x_j \geq 0.
	\]
	This gives us
	\[
	\x_i^\top [\bG]_{ij} \x_j \leq \frac{\norm{\x_i}^2 + \norm{\x_j}^2}{2} = 1.
	\]
	Similarly, by replacing $-\x_j$ with $\x_j$ in the $j$-th block of $\x$, we get
	\[
	\x_i^\top [\bG]_{ij} \x_j \geq -1.
	\]
	Putting these together, we have
	\[
	\left| \x_i^\top [\bG]_{ij} \x_j \right| \leq 1.
	\]
	Unit vectors $\x_i$, $\x_j \in \R^{Md}$ were arbitrary, and thus, the result follows.
	
	$(c)$ 
	Consider the spectral decomposition of $\bG$,
	\[
	\bG = \sum_{i=1}^{d} \alpha_i \v_i \v_i^\top
	\]
	where $\alpha_i > 0$ are the non-zero eigenvalues, and $\v_i$ are the corresponding orthonormal eigenvectors. Let
	\[
	\bB = \left[ \sqrt{\alpha_1} \v_1 \cdots \sqrt{\alpha_d} \v_d \right]^\top.
	\]
	Regard $\bB$ as a block-row, where each element is of size $d \times d$. Notationally, $\bB = [\bB_1 \cdots \bB_M]$, where $\bB_i \in \R^{d \times d}$. Thus,
	\[
	[\bG]_{ij} = \bB_i^\top \bB_j.
	\] 
	In particular, since $[\bG]_{ii} = \bI_d$, we get that
	\[
	\bB_i^\top \bB_i = \bI_d.
	\]
	Now, suppose $\v_1 = (\v_{11},\cdots,\v_{1M})$, where $\v_{1j} \in \R^d$. Note that $\sqrt{\alpha_1}\v_{1j}^\top$ forms the first row of $\bB_j$. From orthogonality of $\bB_j$, we get that, for every $j$,
	\[
	\norm{\sqrt{\alpha_1} \v_{1j}}^2 = 1.
	\]
	Or,
	\[
	\norm{\v_{1j}}^2 = \frac{1}{\alpha_1}, \:\: j \in [1:M].
	\]
	Now, since $\norm{\v_1}^2 = 1$, we have 
	\[
	\sum_{j=1}^{M}\norm{\v_{1j}}^2 = 1,
	\]
	from where we get that $\alpha_1 = M$. Similarly, $\alpha_2 = \cdots = \alpha_d = M$.

%\subsection{Proof of Proposition \ref{prop:dualform}}
%	Let $\bS^\ast = \bC + \Lam^\ast$. We are given that $\bS^\ast \in \S^{Md}_+$, and that $\bS^\ast \bG^\ast = \zer$, where $[\bG^\ast]_{ii} = \bI_d$, and $\bG^\ast \in \S^{Md}_+$. Let
%	\[
%	\bG^\ast = \sum_{i=1}^{Md} \r_i \r_i^\top.
%	\]
%	Then 
%	\[
%	0 = \dotp{\bS^\ast,\bG^\ast} = \sum_{i=1}^{Md} \r_i^\top \bS^\ast \r_i.
%	\]
%	Since $\bS^\ast \in \S^{Md}_+$, we get that, for each $i \in [1:Md]$,
%	\[
%	\r_i^\top \bS^\ast \r_i = 0,
%	\]
%	which, in turn, implies that, for each $i \in [1:Md]$,
%	\[
%	\bS^\ast \r_i = \zer.
%	\]
%	Since $\bS^\ast = \bC + \Lam^\ast$, this is equivalent to saying that
%	\begin{equation} \label{eqn:dualcond}
%		\bR \bC = -\bR \Lam^\ast,
%	\end{equation}
%	where $\bR = [\r_1 \cdots \r_{Md}]^\top \in \R^{Md \times Md}$. 
%	Now, regard $\bR$ as a block row $[\bR_1 \cdots \bR_M]$ with $\bR_i \in \R^{Md \times d}$. Then using the fact that $\Lam^\ast$ is block diagonal, \eqref{eqn:dualcond} gives us
%	\begin{equation} \label{eqn3.1}
%		\sum_{j=1}^{M} \bR_j [\bC]_{ji} = - \bR_i [\Lam^\ast]_{ii}, \ i \in [1:M].
%	\end{equation}
%	Now, since $[\bG^\ast]_{ii} = \bI_d$, and $\bG^\ast = \bR^\top \bR$, we get
%	\[
%	\bR_i^\top \bR_i = \bI_d.
%	\]
%	Thus, left multiplying by $\bR_i^\top$ on both sides of \eqref{eqn3.1} gives us
%	\[
%	[\Lam^\ast]_{ii} = - [\bG^\ast \bC]_{ii}.
%	\]
%	Now, by symmetry of $\Lam^\ast$, we get
%	\[
%	[\Lam^\ast]_{ii} = [\Lam^\ast]_{ii}^\top = - [\bC \bG^\ast]_{ii}, \ i \in [1:M].
%	\]

\subsection{Proof of Proposition \ref{prop:dualform}}
Lemma \ref{lem:kktrel} tells us that $(\bC+\Lam^\ast)\:\bG^\ast = \zer$. That is,
\[
\Lam^\ast \bG^\ast = - \bC \bG^\ast.
\]
Lemma \ref{lem:kktrel} also tells us that $\Lam^\ast$ is a symmetric block-diagonal matrix, and that $[\bG^\ast]_{ii} = \bI_d$. Using these facts, and comparing the diagonal blocks of the left-hand side and the right-hand side, we get
\[
[\Lam^\ast]_{ii} = - [\bC \bG^\ast]_{ii}.
\]
Thus, 
\[
\Lam^\ast = -\mathrm{bd}(\bC \bG^\ast),
\]
where, $\mathrm{bd}$ is the linear operator that leaves the diagonal blocks untouched and sets other elements to $0$.

\subsection{Proof of Proposition \ref{prop:lamblockdiag}}
	We prove the proposition for \ref{ncadmm} by induction. The proof for \ref{cadmm} is exactly the same since the $\bH$-update and $\Lam$-update steps are identical for \ref{ncadmm} and \ref{cadmm}. Clearly, the proposition holds for $k = 0$. Assume that the proposition holds for $k = k_0$. We will show that it then has to hold for $k = k_0 + 1$. Consider the $\bH$-update step in \eqref{ncadmmproj}
	\[
	\bH^{k_0+1} = \Pi_\Theta \left( \bG^{k_0+1} + \rho^{-1} \Lam^{k_0} \right).
	\]
	We know that $\Theta$ is the set of symmetric matrices for which $d \times d$ diagonal blocks are $\bI_d$. It is clear that projection of a matrix on $\Theta$ is obtained by setting the diagonal blocks of the matrix to $\bI_d$. In other words, $\Pi_\Theta(\cdot)$ affects only the diagonal blocks of its argument. By induction hypothesis, $\Lam^{k_0}$ is block diagonal, and thus adding it to $\bG^k_0$ in the $\bH$-update step does not affect the projection on $\Theta$, or, 
	\[
	\bH^{k_0+1} = \Pi_\Theta \left( \bG^{k_0+1} \right).
	\]
	Now, consider the $\Lam$-update step,
	\[
	\Lam^{k_0+1} = \: \Lam^{k_0} + \rho \left( \bG^{k_0+1} - \bH^{k_0+1} \right).
	\]
	Since $\Pi_\Theta(\cdot)$ affects only the diagonal blocks of its arguments, it is clear that the off-diagonal blocks of $\bH^{k_0+1}$ and $\bG^{k_0+1}$ are the same. Thus, $\bG^{k_0+1} - \bH^{k_0+1}$ is a block diagonal matrix, which along with the hypothesis that $\Lam^{k_0}$ is block diagonal, implies that $\Lam^{k_0+1}$ is block diagonal.

\subsection{Proof of Lemma \ref{lem:stable}}
	Since $\mathrm{rank}(\bG_0) = d$ we can write the spectral decomposition of the ground truth solution as (see Proposition \ref{prop:Gprop}(c))
	\[
	\bG_0 = \sum_{i=1}^{d} M \s_i \s_i^\top
	\]
	where $\s_i, \ i \in [1:d]$ are orthonormal. Similarly, we can write the optimal solution corresponding to the perturbed data $\bC$ as
	\[
	\bG^\ast = \sum_{i=1}^{Md} \alpha_i \g_i \g_i^\top
	\]
	where $\g_i, \ i \in [1:d]$ are orthonormal.
	
	\noindent Let $\bP$ be the orthoprojector on $\mathrm{nullspace}(\bC_0)$. Since $\bC_0 \bG_0 = \zer$, $\mathrm{rank}(\bG_0) = d$, and $\mathrm{nullity}(\bC_0) = d$, we deduce that
	\[
	\bP = \sum_{i=1}^{d} \s_i \s_i^\top = \frac{1}{M} \bG_0.
	\]
	Clearly, $\bR = \bI - \bP$ is the orthoprojector on $\mathrm{range}(\bC_0)$, and
	\[
	\g_i = \bP \g_i + \bR \g_i.
	\]
	Let $\h_i := \bR \g_i$. As will be apparent, we want to lowerbound $\dotp{ \bC_0, \bG^\ast } $. Now,
	\begin{equation} \label{stable0}
	\begin{aligned}
	\dotp{ \bC_0, \bG^\ast } 
	&= \sum_{i=1}^{Md} \alpha_i \big( \g_i^\top \bC_0 \g_i \big) \\
	&= \sum_{i=1}^{Md} \alpha_i \big( \h_i^\top \bC_0 \h_i \big) \\
	&\geq \lambda_{d+1}(\bC_0) \sum_{i=1}^{Md} \alpha_i \, \norm{\h_i}^2.
	\end{aligned}
	\end{equation}
	Thus, to get a lowerbound on $\dotp{ \bC_0, \bG^\ast } $, we just have to lowerbound $\sum_{i=1}^{Md} \alpha_i \, \norm{\h_i}^2$. To do this, we first reformulate $\norm{\h_i}^2$ as,
	\[
	\begin{aligned}
	\norm{\h_i}^2 
	&= \norm{\bR \g_i}^2 \\
	&= \g_i^\top \bR \g_i \\
	&= \g_i^\top (\bI - \bP) \g_i \\
	&= \norm{\g_i}^2 - \frac{1}{M} \big( \g_i^\top \bG_0 \g_i \big) \\
	&= 1 - \frac{1}{M} \big( \g_i^\top \bG_0 \g_i \big),
	\end{aligned}
	\]
	where the second equality holds because $\bR^2 = \bR$, since $\bR$ is an orthoprojector.
	Thus,
	\begin{equation} \label{stable1}
	\begin{aligned}	
	\sum_{i=1}^{Md} \alpha_i \norm{\h_i}^2 
	&= \sum_{i=1}^{Md} \alpha_i - \frac{1}{M} \sum_{i=1}^{Md} \alpha_i \big( \g_i^\top \bG_0 \g_i \big) \\
	&= Md - \frac{1}{M}  \dotp{ \bG_0, \bG^\ast } .
	\end{aligned}
	\end{equation}
	Now, to upperbound $ \dotp{ \bG_0, \bG^\ast } $, we note that
	\begin{equation} \label{stable2}
	\begin{aligned}
	\dotp{ \bG_0, \bG^\ast } 
	&= \frac{1}{2} \norm{\bG_0}^2 + \frac{1}{2} \norm{\bG^\ast}^2 - \frac{1}{2} \norm{\Del}^2 \\
	&\leq \frac{1}{2} M^2 d + \frac{1}{2} M^2 d - \frac{1}{2} \norm{\Del}^2 = M^2 d - \frac{1}{2} \norm{\Del}^2
	\end{aligned}
	\end{equation}
	where, to get the first inequality, we use the fact that any singular value of $[\bG^\ast]_{ij}$ is at most $1$ for every $1 \leq i,j \leq M$ (Proposition \ref{prop:Gprop}).
	Combining \eqref{stable1} and \eqref{stable2}, we get
	\[
	\sum_{i=1}^{Md} \alpha_i \norm{\h_i}^2 \geq \frac{1}{2M} \norm{\Del}^2.
	\]
	Plugging this in \eqref{stable0} gives us,
	\[
	\dotp{ \bC_0, \bG^\ast } \geq \frac{\lambda_{d+1}(\bC_0)}{2M} \norm{\Del}^2.
	\]
	Now,
	\[
	\begin{aligned}
	-\norm{\bW} \, \norm{\Del} 
	&\leq \dotp{ \bW, \Del } \\
	&= \big( \dotp{ \bC, \bG^\ast } - \dotp{ \bC, \bG_0 } \big) + \big( \dotp{ \bC_0, \bG_0 } - \dotp{ \bC_0, \bG^\ast } \big) \\
	&\leq - \dotp{ \bC_0, \bG^\ast }
	\end{aligned}
	\]
	where the first inequality is Cauchy-Schwarz (with inner product between two symmetric matrices defined as the trace of their product), and the last inequality is due to the fact that the term in first paranthesis is negative by optimality of $\bG^\ast$, and that the first term in second paranthesis is $0$.
	So,
	\[
	\begin{aligned}
	\norm{\bW} \, \norm{\Del} 
	&\geq \dotp{\bC_0,\bG^\ast} \\
	&\geq \frac{\lambda_{d+1}(\bC_0)}{2M} \norm{\Del}^2
	\end{aligned}
	\]
	which finally gives us,
	\[
	\norm{\Del} \leq \frac{2M}{\lambda_{d+1}(\bC_0)} \norm{\bW}.	
	\]

\subsection{Proof of Lemma \ref{lem:cadmm}}
	The proof of this lemma essentially follows the convergence proof for convex ADMM presented in \cite{boyd}. Note that
	\begin{equation} \label{boyd0}
	\begin{aligned}
	&\norm{\bA^{k} - \bA^*}^2 - \norm{\bA^{k+1} - \bA^*}^2 \\
	&= \norm{\bH^{k} - \bH^*}^2 - \norm{\bH^{k+1} - \bH^*}^2
	+ \rho^{-2} \left(\norm{\Lam^{k} - \Lam^*}^2 - \norm{\Lam^{k+1} - \Lam^*}^2 \right) \\
	\end{aligned}
	\end{equation}
	where we have used the fact that, for every $k$,
	\[
	\dotp{ \bH^k - \bH^\ast, \Lam^k - \Lam^\ast } = 0,
	\]
	because $\bH^k - \bH^\ast$ has $\zer$ on the diagonal blocks, and $\Lam^k - \Lam^\ast$ is block diagonal. 
	Further algebraic manipulations give us
	\begin{equation} \label{boyd3}
	\begin{aligned}
	&\norm{\bA^{k} - \bA^*}^2 - \norm{\bA^{k+1} - \bA^*}^2 \\
	&= \norm{\bH^{k} - \bH^{k+1}}^2 + \norm{\bG^{k+1} - \bH^{k+1}}^2
	- 2\dotp{ \bH^{k} - \rho^{-1}(\bC+\Lam^k) - \bG^{k+1}, \bH^* - \bG^{k+1} } \\
	&\phantom{}- 2 \rho^{-1}\left(\dotp{ \bC,\bH^* - \bG^{k+1} } - \dotp{ \Lam^*,\bG^{k+1} - \bH^{k+1} } \right)
	\end{aligned}	
	\end{equation}
	Observe that the first two terms in \eqref{boyd3} are always nonnegative. Nonnegativity of the third term in \eqref{boyd3} follows from the convex projection property, i.e.,  
	\[
	\dotp{ \bH^{k} - \rho^{-1}(\bC+\Lam^k) - \bG^{k+1}, \bH^* - \bG^{k+1} } \leq 0.
	\]
	To see why the fourth term in \eqref{boyd3} is nonnegative, observe that the Lagrangian for the convex program \ref{csdp} is given by
	\[
	\cL(\bG,\bH,\Lam) = \mathrm{Tr}(\bC \bG) + \dotp{ \Lam, \bG-\bH } .
	\]
	We have already seen in the proof of Lemma \ref{lem:kktrel} that strong duality holds for \ref{csdp}. Thus, from the saddle point property of the Lagrangian at global optimum \cite{boydbook}, we get
	\[
	\cL(\bG^\ast,\bH^\ast,\Lam^\ast) \leq \cL(\bG^{k+1},\bH^{k+1},\Lam^\ast),
	\]
	which gives us
	\[
	\dotp{ \bC,\bH^* - \bG^{k+1} } - \dotp{ \Lam^*,\bG^{k+1} - \bH^{k+1} } \leq 0.
	\]

%------------------------------------------------------------------

%% For tables use
%\begin{table}
%% table caption is above the table
%\caption{Please write your table caption here}
%\label{tab:1}       % Give a unique label
%% For LaTeX tables use
%\begin{tabular}{lll}
%\hline\noalign{\smallskip}
%first & second & third  \\
%\noalign{\smallskip}\hline\noalign{\smallskip}
%number & number & number \\
%number & number & number \\
%\noalign{\smallskip}\hline
%\end{tabular}
%\end{table}

% BibTeX users please use one of
\bibliographystyle{spbasic}      % basic style, author-year citations
\bibliography{references}   % name your BibTeX data base

\begin{thebibliography}{32}
\providecommand{\natexlab}[1]{#1}
\providecommand{\url}[1]{{#1}}
\providecommand{\urlprefix}{URL }
\expandafter\ifx\csname urlstyle\endcsname\relax
  \providecommand{\doi}[1]{DOI~\discretionary{}{}{}#1}\else
  \providecommand{\doi}{DOI~\discretionary{}{}{}\begingroup
  \urlstyle{rm}\Url}\fi
\providecommand{\eprint}[2][]{\url{#2}}

\bibitem[{Absil et~al.(2009)Absil, Mahony, and
  Sepulchre}]{absil2009optimization}
Absil PA, Mahony R, Sepulchre R (2009) Optimization {A}lgorithms on {M}atrix
  {M}anifolds. Princeton University Press

\bibitem[{Alizadeh et~al.(1997)Alizadeh, Haeberly, and Overton}]{aliza}
Alizadeh F, Haeberly JPA, Overton ML (1997) Complementarity and nondegeneracy
  in semidefinite programming. Mathematical Programming 77(1):111--128

\bibitem[{Bandeira et~al.(2017)Bandeira, Boumal, and Singer}]{bandeira}
Bandeira AS, Boumal N, Singer A (2017) Tightness of the maximum likelihood
  semidefinite relaxation for angular synchronization. Mathematical Programming
  163(1-2):145--167

\bibitem[{Bertsekas(1999)}]{bert}
Bertsekas DP (1999) Nonlinear Programming. Athena Scientific Belmont

\bibitem[{Bhatia(2013)}]{bhatia}
Bhatia R (2013) Matrix Analysis, vol 169. Springer Science \& Business Media

\bibitem[{Biswas et~al.(2006)Biswas, Liang, Toh, Ye, and Wang}]{biswas06}
Biswas P, Liang TC, Toh KC, Ye Y, Wang TC (2006) Semidefinite programming
  approaches for sensor network localization with noisy distance measurements.
  IEEE transactions on automation science and engineering 3(4):360--371

\bibitem[{Boyd and Vandenberghe(2004)}]{boydbook}
Boyd S, Vandenberghe L (2004) Convex Optimization. Cambridge University Press

\bibitem[{Boyd et~al.(2011)Boyd, Parikh, Chu, Peleato, and Eckstein}]{boyd}
Boyd S, Parikh N, Chu E, Peleato B, Eckstein J (2011) Distributed optimization
  and statistical learning via the alternating direction method of multipliers.
  Foundations and Trends \textsuperscript{\textregistered} in Machine Learning
  3(1):1--122

\bibitem[{Chang et~al.(2016)Chang, Hong, Liao, and Wang}]{chang}
Chang TH, Hong M, Liao WC, Wang X (2016) Asynchronous distributed {ADMM} for
  large-scale optimization -- part {I}: algorithm and convergence analysis.
  IEEE Transactions on Signal Processing 64(12):3118--3130

\bibitem[{Chartrand and Wohlberg(2013)}]{chartrand2013}
Chartrand R, Wohlberg B (2013) A nonconvex {ADMM} algorithm for group sparsity
  with sparse groups. Proc IEEE International Conference on Acoustics, Speech
  and Signal Processing pp 6009--6013

\bibitem[{Chaudhury et~al.(2015{\natexlab{a}})Chaudhury, Khoo, and
  Singer}]{knc2015}
Chaudhury K, Khoo Y, Singer A (2015{\natexlab{a}}) Large-scale sensor network
  localization via rigid subnetwork registration. Proc IEEE International
  Conference on Acoustics, Speech and Signal Processing pp 2849--2853

\bibitem[{Chaudhury et~al.(2015{\natexlab{b}})Chaudhury, Khoo, and
  Singer}]{knc}
Chaudhury KN, Khoo Y, Singer A (2015{\natexlab{b}}) Global registration of
  multiple point clouds using semidefinite programming. SIAM Journal on
  Optimization 25(1):468--501

\bibitem[{Cucuringu et~al.(2012{\natexlab{a}})Cucuringu, Lipman, and
  Singer}]{mihai12}
Cucuringu M, Lipman Y, Singer A (2012{\natexlab{a}}) Sensor network
  localization by eigenvector synchronization over the {E}uclidean group. ACM
  Transactions on Sensor Networks (TOSN) 8(3):19

\bibitem[{Cucuringu et~al.(2012{\natexlab{b}})Cucuringu, Singer, and
  Cowburn}]{mihaimol12}
Cucuringu M, Singer A, Cowburn D (2012{\natexlab{b}}) Eigenvector
  synchronization, graph rigidity and the molecule problem. Information and
  Inference: A Journal of the IMA 1(1):21--67

\bibitem[{Diamond et~al.(2017)Diamond, Takapoui, and Boyd}]{diamond}
Diamond S, Takapoui R, Boyd S (2017) A general system for heuristic
  minimization of convex functions over non-convex sets. Optimization Methods
  and Software 33(1):165--193

\bibitem[{Donoho et~al.(2011)Donoho, Maleki, and Montanari}]{donoho2011}
Donoho DL, Maleki A, Montanari A (2011) The noise-sensitivity phase transition
  in compressed sensing. IEEE Transactions on Information Theory
  57(10):6920--6941

\bibitem[{Fang and Toh(2013)}]{fang2013}
Fang X, Toh KC (2013) Using a distributed {SDP} approach to solve simulated
  protein molecular conformation problems. Distance Geometry pp 351--376

\bibitem[{Hong et~al.(2016)Hong, Luo, and Razaviyayn}]{hong}
Hong M, Luo ZQ, Razaviyayn M (2016) Convergence analysis of alternating
  direction method of multipliers for a family of nonconvex problems. SIAM
  Journal on Optimization 26(1):337--364

\bibitem[{Krishnan et~al.(2005)Krishnan, Lee, Moore, and
  Venkatasubramanian}]{krishnan}
Krishnan S, Lee PY, Moore JB, Venkatasubramanian S (2005) Global registration
  of multiple 3{D} point sets via optimization-on-a-manifold. Proc Eurographics
  Symposium on Geometry Processing pp 187--196

\bibitem[{Mao et~al.(2007)Mao, Fidan, and Anderson}]{mao2007}
Mao G, Fidan B, Anderson BDO (2007) Wireless sensor network localization
  techniques. Computer Networks 51(10):2529--2553

\bibitem[{Miraj~Ahmed and Chaudhury(2017)}]{miraj}
Miraj~Ahmed S, Chaudhury KN (2017) Global multiview registration using
  non-convex {ADMM}. Proc IEEE International Conference on Image Processing pp
  987--991

\bibitem[{Mixon(2015)}]{mixon2015phase}
Mixon DG (2015) Phase transitions in phase retrieval. Excursions in Harmonic
  Analysis pp 123--147

\bibitem[{Sanyal et~al.(2017{\natexlab{a}})Sanyal, Ahmed, Jaiswal, and
  Chaudhury}]{kncspl}
Sanyal R, Ahmed SM, Jaiswal M, Chaudhury KN (2017{\natexlab{a}}) A scalable
  {ADMM} algorithm for rigid registration. IEEE Signal Processing Letters
  24(10):1453--1457

\bibitem[{Sanyal et~al.(2017{\natexlab{b}})Sanyal, Jaiswal, and
  Chaudhury}]{sanyal}
Sanyal R, Jaiswal M, Chaudhury KN (2017{\natexlab{b}}) On a registration-based
  approach to sensor network localization. IEEE Transactions on Signal
  Processing 65(20):5357--5367

\bibitem[{Shang et~al.(2004)Shang, Rumi, Zhang, and Fromherz}]{shang2004}
Shang Y, Rumi W, Zhang Y, Fromherz M (2004) Localization from connectivity in
  sensor networks. IEEE Transactions on Parallel and Distributed Systems
  15(11):961--974

\bibitem[{Sharp et~al.(2004)Sharp, Lee, and Wehe}]{sharp2004}
Sharp GC, Lee SW, Wehe DK (2004) Multiview registration of 3{D} scenes by
  minimizing error between coordinate frames. IEEE Transactions on Pattern
  Analysis and Machine Intelligence 26(8):1037--1050

\bibitem[{Simonetto and Leus(2014)}]{simonetto14}
Simonetto A, Leus G (2014) Distributed maximum likelihood sensor network
  localization. IEEE Transactions on Signal Processing 62(6):1424--1437

\bibitem[{Soares et~al.(2015)Soares, Xavier, and Gomes}]{soares15}
Soares C, Xavier J, Gomes J (2015) Simple and fast convex relaxation method for
  cooperative localization in sensor networks using range measurements. IEEE
  Transactions on Signal Processing 63(17):4532--4543

\bibitem[{Wang et~al.(2015)Wang, Yin, and Zeng}]{wotao}
Wang Y, Yin W, Zeng J (2015) Global convergence of {ADMM} in nonconvex
  nonsmooth optimization. Journal of Scientific Computing pp 1--35

\bibitem[{Wang et~al.(2008)Wang, Zheng, Ye, and Boyd}]{wang08}
Wang Z, Zheng S, Ye Y, Boyd S (2008) Further relaxations of the semidefinite
  programming approach to sensor network localization. SIAM Journal on
  Optimization 19(2):655--673

\bibitem[{Zhang et~al.(2010)Zhang, Liu, Gotsman, and Gortler}]{arap10}
Zhang L, Liu L, Gotsman C, Gortler SJ (2010) An as-rigid-as-possible approach
  to sensor network localization. ACM Transactions on Sensor Networks (TOSN)
  6(4):35

\bibitem[{Zhang and Zha(2004)}]{zhang2004}
Zhang Z, Zha H (2004) Principal manifolds and nonlinear dimensionality
  reduction via tangent space alignment. SIAM Journal on Scientific Computing
  26(1):313--338

\end{thebibliography}

\end{document}